\newtheorem{Theorem}{Theorem}
\newtheorem{Lemma}{Lemma}[section]
\newtheorem{Ex}{Example}[section]
\newtheorem{Remark}{Remark}[section]
\theoremstyle{remark}
\newcommand{\be}{\begin{equation}}
\newcommand{\ee}{\end{equation}}
\newcommand{\Id}{\textrm{\rm Id}}
\newcommand{\ddd}{\mathrm{d}}
\newcommand{\pd}[2]{\frac{\partial#1}{\partial#2}}
\newcommand{\weg}[1]{}
\title{On operator fields in the upper triangular Toeplitz form}
\author{M. M. Chernin and A. Yu. Konyaev \footnote{Faculty of Mechanics and Mathematics, Moscow State University, and Moscow Center for Fundamental and Applied Mathematics,119992, Moscow Russia}}
\date{August 2025}
\begin{document}

\maketitle

\section{Introduction}

We say that an operator field $L$ is in the upper triangular Toeplitz form \cite{bottcher} if, in given coordinates $u^1, \dots, u^n$, one has
\begin{equation}\label{toeplitz}
L = \left( \begin{array}{cccc}
     g_n & g_{n - 1} & \dots & g_1  \\
     0 &  g_n & \ddots & \vdots \\
     & \ddots &\ddots  & g_{n - 1}\\
     0 & 0 & \dots  & g_n
\end{array}\right).    
\end{equation}
The operator fields in the upper triangular Toeplitz form appear in many branches of mathematics and mathematical physics: in the theory of Frobenius and F-manifolds \cite{antonov, perletti, perletti2}, in reductions of hydrodynamic chains and linearly degenerate dispersionless PDEs \cite{pavlov}, in the study of the soliton gas \cite{vergallo, vergallo2}, and in Nijenhuis geometry \cite{curv, nij4, app_nij5, nij1}, to name a few. 

The main purpose of this article is to investigate the following fundamental problem: 
\begin{enumerate}
    \item[Q:] Given the operator field $L$ in the upper triangular Toeplitz form in coordinates $u^1, \dots, u^n$ with the regularity condition $g_{n - 1} \neq 0$, describe all the coordinate transformations $v(u)$ that preserve this form.
\end{enumerate}
The regularity condition ensures that the operator $L$ is similar to the Jordan block of maximal size in every point in the neighborhood of the coordinate origin. 

It turns out that this problem can be solved in full generality. Informally speaking, all such coordinate transformations are parameterized by a single function of one variable and $n - 1$ functions of two variables. Moreover, given the collection of such functions, one can construct the corresponding transformation $v(u)$ explicitly. The only thing one needs is a recursion procedure, which involves solving triangular linear systems and integration. 

Let us discuss the results in greater detail. To do that, one first needs to introduce certain elements of Nijenhuis geometry. Recall that the Nijenhuis torsion of $L$ is a tensor of type $(1, 2)$, given by the formula
\begin{equation}\label{nij}
\mathcal N_L(\xi, \eta) = L^2[\xi, \eta] + [L\xi, L\eta] - L[L\xi, \eta] - L[\xi, L\eta].    
\end{equation}
Here, $\xi, \eta$ are arbitrary vector fields, and the brackets $[\, , \,]$ stand for the standard vector field commutator. The Haantjes torsion is given by the formula
\begin{equation}\label{haa}
\mathcal H_L(\xi, \eta) = L^2 \mathcal N_L(\xi, \eta) + \mathcal N_L(L\xi, L\eta) - L\mathcal N_L(L\xi, \eta) - L\mathcal N_L(\xi, L\eta).    
\end{equation}
The notation $L \mathcal N_L(\xi, \eta)$ means that we apply $L$ to the vector field $\mathcal N_L(\xi, \eta)$. The operator field $L$ is called the Nijenhuis (Haantjes) operator if its Nijenhuis (Haantjes) torsion identically vanishes. Obviously, if $L$ is Nijenhuis, then it is Haantjes. 

Assume now that $L$ is in Toeplitz form in the given coordinates. Define in the same coordinates 
\begin{equation}\label{jordan}
J = \left( \begin{array}{ccccc}
     0 & 1 & 0 & \dots & 0 \\
     0 & 0 & 1 & \dots & 0 \\
      & & & \ddots & \\
      0 & 0 & 0 & \ddots & 1\\
      0 & 0 & 0 & \dots & 0
\end{array}\right).    
\end{equation}
Obviously, $J$ is Nijenhuis. At the same time $L = g_1 J^{n - 1} + \dots + g_n \Id$, so $L$ is a Haantjes operator (Corollary 3.3 in \cite{bg}). The results of the paper are as follows.

In Theorem \ref{t1} we provide the sufficient conditions for an operator field in Toeplitz form to be Nijenhuis. It also states that in the case of $g_{n - 1} \neq 0$ in formula \eqref{toeplitz} operator, the same conditions are necessary as well. We give an example \ref{ex1} to show that if $g_{n - 1} = 0$, then there exist Nijenhuis operators outside the scope of Theorem \ref{t1}.

The Theorem \ref{t1} reduces the question to the overdetermined system of PDEs with constant coefficients. The initial condition for the corresponding system is at most one function of one variable and $n - 1$ functions of two variables. The Theorem \ref{t2} explicitly solves the aforementioned system: given the initial condition, it produces the corresponding Nijenhuis operator in explicit form. The description of such Nijenhuis operators is an interesting result in its own right.

Theorem \ref{t3} describes the linear system of PDEs that the coordinate change preserving the upper triangular Toeplitz form must satisfy. The idea here is as follows: in given coordinates, fix an arbitrary Nijenhuis operator $M$ in the upper triangular Toeplitz form with $g_{n - 1} \neq 0$ and zeros on the diagonal. Due to the general theory in \cite{nij1}, there exists a coordinate change $v(u)$ such that, in coordinates $v^1, \dots, v^n$, the operator has the form $J$. This yields certain relations on $v(u)$ with respect to the Nijenhuis operator, thus producing the differential equations on $v(u)$.

Finally, we provide an algorithm for resolving the aforementioned system of equations. The theorem \ref{t4} states that the algorithm is correct. It requires only integration and answers the main question posed in the beginning of the paper. We supply the example \eqref{ex2}, which yields all the coordinate transformations that preserve the upper triangular Toeplitz form in dimension four.

The paper is organized as follows: in section \ref{main1}, we formulate the main results, provide remarks, and include necessary examples. The proofs are given in sections \ref{proof1}, \ref{proof2}, \ref{proof3}, and \ref{proof4}. The work is supported by the grant RScF 24-21-00450.


\section{Main results}\label{main1}

We start this section with the following theorem.

\begin{Theorem}\label{t1}
Fix the coordinates $u^1, \dots, u^n$ and assume that the operator field $L$ is in the upper triangular Toeplitz form. The following (equivalent) conditions on $g_i, i = 1, \dots, n$ are sufficient for $L$ to be Nijenhuis:
\begin{enumerate}
    \item Differentials $\ddd g_i$ satisfy the linear system of PDEs
    \begin{equation}\label{eq1}
    \begin{aligned}
    0 & = J^{2*} \ddd g_n, \\
    0 & = J^{2*} \ddd g_{n - 1} - 2 J^* \ddd g_n, \\
    0 & = J^{2*} \ddd g_i - 2 J^* \ddd g_{i + 1} + \ddd g_{i + 2}, \quad i = 1, \dots, n - 2,
    \end{aligned}    
    \end{equation}
    Here $J$ is given by the formula \eqref{jordan}.
    \item Denote $\pd{g}{u}$ as the Jacobi matrix of the system of functions $g_1, \dots, g_n$. Then
    \begin{equation}\label{eq2}
    \Big[\Big[\pd{g}{u}, J\Big], J\Big] = 0,   
    \end{equation}
    where $[, , \,]$ stands for the standard matrix commutator, that is, $[A, B] = AB - BA$. The matrix $J$ is the same as above.
    \item Denote the column $(g_1, \dots, g_n)^T$ by $g$
    \begin{equation}\label{eq3}
        \begin{aligned}
        & J^2 \pd{g}{u^1} = 0, \\
        & J^2 \pd{g}{u^2} - 2 J \pd{g}{u^1} = 0, \\
        & J^2 \pd{g}{u^{i}} - 2 J \pd{g}{u^{i - 1}} + \pd{g}{u^{i - 2}} = 0, \quad i = 3, \dots, n. \\
\end{aligned}
    \end{equation}
\end{enumerate}
If $g_{n - 1} \neq 0$, then the conditions above are also necessary.
\end{Theorem}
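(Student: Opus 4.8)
The plan. First dispose of the equivalence of (1), (2), (3), which is pure linear algebra: all three are the single matrix identity $\pd gu J^{2}-2J\pd gu J+J^{2}\pd gu=0$ read in different ways. The rows of the Jacobi matrix $\pd gu$ are $\ddd g_{1},\dots,\ddd g_{n}$ and its columns are $\pd g{u^{1}},\dots,\pd g{u^{n}}$; since left multiplication by $J$ shifts the rows of a matrix upward and right multiplication by $J$ shifts its columns, reading that identity row by row gives (1), reading it column by column gives (3), and $\pd gu J^{2}-2J\pd gu J+J^{2}\pd gu=\big[\big[\pd gu,J\big],J\big]$ is (2).

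For the Nijenhuis property I would use the identity, valid for any $(1,1)$-tensor $L$ and any torsion-free connection $\nabla$,
\[
\mathcal N_{L}(\xi,\eta)=(\nabla_{L\xi}L)\eta-(\nabla_{L\eta}L)\xi-L\big((\nabla_{\xi}L)\eta-(\nabla_{\eta}L)\xi\big),
\]
taking $\nabla$ to be the flat connection in which $u^{1},\dots,u^{n}$ are affine. Then $J$ is $\nabla$-parallel, $L=g_{n}\Id+g_{n-1}J+\dots+g_{1}J^{n-1}=:H(J)$, $\nabla_{\xi}L=\sum_{p}(\xi g_{p})\,J^{n-p}$, and $L$, $\nabla_{\xi}L$, $J$ are all polynomials in $J$ and hence commute pairwise. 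Substituting coordinate vector fields and using this commutativity, the right-hand side should collapse to the coefficient-extraction formula
\[
\big(\mathcal N_{L}\big)^{i}_{jk}=[\lambda^{k-i}\mu^{j-1}]\big((H(\mu)-H(\lambda))\,\mathcal G\big)-[\lambda^{j-i}\mu^{k-1}]\big((H(\mu)-H(\lambda))\,\mathcal G\big),
\]
where $H(\lambda)=g_{n}+g_{n-1}\lambda+\dots+g_{1}\lambda^{n-1}$, the polynomial $\mathcal G=\mathcal G(\lambda,\mu)=\sum_{p,a}\big(\partial g_{p}/\partial u^{a}\big)\,\lambda^{n-p}\mu^{a-1}$ is the generating function of the Jacobi matrix, $[\lambda^{s}\mu^{t}](\cdot)$ denotes the coefficient (taken to be $0$ when $s<0$ or $t<0$), and $\big(\mathcal N_{L}\big)^{i}_{jk}$ is the $i$-th component of $\mathcal N_{L}$ evaluated on $\partial/\partial u^{j}$ and $\partial/\partial u^{k}$. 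Organizing the commutator calculus so that the Toeplitz shape $L^{i}_{j}=g_{n-(j-i)}$ collapses into this form is the first step requiring care.

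Since $\mathcal N_{L}=0$ and the equations (1)--(3) are pointwise conditions on the first derivatives of $g$, fix a point with $g_{n-1}\ne0$ and work in $\mathbb R[\lambda,\mu]/(\lambda^{n},\mu^{n})$, where $\lambda,\mu$ are nilpotent. Put $P:=(H(\mu)-H(\lambda))\,\mathcal G$ and $A_{s,t}:=[\lambda^{s}\mu^{t}]P$, so that the components above are $A_{k-i,j-1}-A_{j-i,k-1}$. A short combinatorial argument then shows that $\mathcal N_{L}$ vanishes identically iff $A_{s,t-1}=A_{s-1,t}$ for all $0\le s,t\le n-1$ — equivalently, iff $A$ is constant along the anti-diagonals $\{s+t=\mathrm{const}\}$ of $\{0,\dots,n-1\}^{2}$ and vanishes on those with $s+t\le n-2$ — i.e.\ iff $(\mu-\lambda)P\equiv0\pmod{(\lambda^{n},\mu^{n})}$. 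On the other hand $[\lambda^{s}\mu^{t}]\big((\mu-\lambda)^{2}\mathcal G\big)$ is, after the relabelling $s=n-a$, $t=b-1$, exactly the $(a,b)$-entry of $\big[\big[\pd gu,J\big],J\big]$, so (1)--(3) say precisely $(\mu-\lambda)^{2}\mathcal G\equiv0\pmod{(\lambda^{n},\mu^{n})}$. Finally $H(\mu)-H(\lambda)=(\mu-\lambda)Q(\lambda,\mu)$ with $Q$ a polynomial whose constant term is $H'(0)=g_{n-1}$, so $(\mu-\lambda)P=(\mu-\lambda)^{2}Q\,\mathcal G$. Sufficiency is immediate: if $(\mu-\lambda)^{2}\mathcal G\in(\lambda^{n},\mu^{n})$ then multiplying by $Q$ gives $(\mu-\lambda)P\in(\lambda^{n},\mu^{n})$, i.e.\ $\mathcal N_{L}=0$. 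For necessity, when $g_{n-1}\ne0$ the constant term of $Q$ is invertible and, $\lambda,\mu$ being nilpotent, $Q$ is a unit of $\mathbb R[\lambda,\mu]/(\lambda^{n},\mu^{n})$; cancelling $Q$ from $(\mu-\lambda)^{2}Q\,\mathcal G\equiv0$ gives $(\mu-\lambda)^{2}\mathcal G\equiv0$, which is (3).

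The genuinely nontrivial steps are thus the reduction of the commutator calculus to the generating-function formula for $\big(\mathcal N_{L}\big)^{i}_{jk}$, and the combinatorial lemma identifying ``every component of $\mathcal N_{L}$ vanishes'' with ``$A$ is constant along anti-diagonals'', i.e.\ with a congruence modulo $(\lambda^{n},\mu^{n})$; everything after that is formal ring theory. This picture also makes transparent why the hypothesis $g_{n-1}\ne0$ cannot be dropped: without it $Q$ need not be a unit, the cancellation fails, and Nijenhuis operators outside the scope of (1)--(3) appear, as in Example~\ref{ex1}.
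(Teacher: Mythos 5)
Your proposal is correct, but it reaches the result by a genuinely different route from the paper's. The equivalence of (1)--(3) is treated identically in both places (the single identity $\pd{g}{u}J^2-2J\pd{g}{u}J+J^2\pd{g}{u}=0$ read by rows, by columns, or as the double commutator). For the Nijenhuis statement, however, the paper argues tensorially: it introduces the concomitant $\langle L,M\rangle$, the symmetric tensor $T_L=\sum_i(\ddd g_i\otimes J^{n-i}+J^{n-i}\otimes \ddd g_i)$ and the skew tensor $M_L$ measuring the failure of $J$ to be self-adjoint with respect to $T_L$; it proves $\langle L,L\rangle=T_L(L\cdot,\cdot)-T_L(\cdot,L\cdot)$, so that $L$ is Nijenhuis iff $L$ is $T_L$-self-adjoint, gets sufficiency because $L$ is a polynomial in $J$, gets necessity because $J$ is a polynomial in $L$ when $g_{n-1}\neq 0$ ($gl$-regularity), and finally translates $M_L=0$ into \eqref{eq1} by evaluating on cyclic vectors of $J$ and using their density. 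You instead compute $\mathcal N_L$ through the flat coordinate connection and transfer everything to $\mathbb{R}[\lambda,\mu]/(\lambda^n,\mu^n)$. I checked the two steps you flag: the component formula $(\mathcal N_L)^i_{jk}=[\lambda^{k-i}\mu^{j-1}]P-[\lambda^{j-i}\mu^{k-1}]P$ with $P=(H(\mu)-H(\lambda))\mathcal{G}$ does come out (each of the four terms of the connection formula is a convolution of coefficient sequences), and the anti-diagonal combinatorics does reduce $\mathcal N_L=0$ to $(\mu-\lambda)P\equiv 0$; the one point needing care there is that the adjacent relation $A_{s,t-1}=A_{s-1,t}$ is produced directly by a Nijenhuis component only for $s\le t$, so for the remaining pairs one must first establish constancy along each anti-diagonal with $s+t\ge n-1$ by chaining every point to the one with $t=n-1$, and vanishing of the anti-diagonals with $s+t\le n-2$ by comparing with the out-of-range point $(-1,s+t+1)$. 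After that, the factorization $(\mu-\lambda)P=(\mu-\lambda)^2Q\,\mathcal{G}$ with $Q(0,0)=g_{n-1}$ gives sufficiency for free and necessity by cancelling the unit $Q$. The two proofs localize the hypothesis $g_{n-1}\neq 0$ in equivalent places ($J\in\mathbb{R}[L]$ pointwise versus $Q$ being a unit of the local ring), and both explain Example \ref{ex1} the same way; yours is self-contained polynomial algebra once the component formula is in place, while the paper's buys the auxiliary tensor $T_L$ and the self-adjointness picture that it reuses in the later sections.
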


The next example shows that without the condition $g_{n - 1} \neq 0$, the equations in Theorem \ref{t1} do not necessarily hold for the Nijenhuis operator in the upper triangular Toeplitz form.

\begin{Ex}\label{ex1}
\rm{
Assume that $n = 3$ and $g_{2} \equiv 0$. Let $L$ be a Nijenhuis operator in the upper triangular Toeplitz form. By direct calculations we get
$$
\begin{aligned}
\mathcal N_L(\partial_1, \partial_2) = 0, \quad  \mathcal N_L(\partial_1, \partial_3) = 2 g_1 \pd{g_3}{u^1} \partial_1, \quad  \mathcal N_L(\partial_2, \partial_3) = g_1 \pd{g_3}{u^2} \partial_1 + g_1 \pd{g_3}{u^1} \partial_2.
\end{aligned}
$$
Vanishing of the Nijenhuis torsion implies that either $g_3$ does not depend on $u^1, u^2$ or $g_1$ identically vanishes. We get two classes of Nijenhuis operators
$$
A = \left(\begin{array}{ccc}
     a(u^3) & 0 & b(u^1, u^2, u^3) \\
     0 & a(u^3) & 0 \\
     0 & 0 & a(u^3)
\end{array}\right), \quad B = \left(\begin{array}{ccc}
     c(u^1, u^2, u^3) & 0 & 0 \\
     0 & c(u^1, u^2, u^3) & 0 \\
     0 & 0 & c(u^1, u^2, u^3)
\end{array}\right).
$$
Here $a$ is a function of single variable and $b, c$ are arbitrary functions of three variables. Both Nijenhuis operators do not satisfy the equations in Theorem \ref{t1}. $\blacksquare$
}    
\end{Ex}

Let us recall some facts about matrix-valued functions (chapter 5 in \cite{gantmaher}). Consider a smooth function $f$ of two variables and a pair of polynomials
$$
p(t) = c_1 t^{n - 1} + \dots + c_n \quad \text{and} \quad q(t) = m_1 t^{n - 1} + \dots + m_n.
$$
Take the following decomposition (we change the numeration for the first $n$ terms) in $t = 0$
\begin{equation}\label{decomp}
f\big(p(t), q(t)\big) = f_{n} + f_{n - 1} t + \dots + f_1 t^{n - 1} + \{\text{terms, containing $t^k$ for $k \geq n$}\} .    
\end{equation}
Each $f_i$ is a function in $c_i, m_j$. Now consider a pair of operators in Toeplitz form
$$
P = \left( \begin{array}{ccccc}
     c_n & c_{n - 1} &\dots  & c_2 & c_1  \\
     0 & c_n & \ddots & & c_2 \\
     &  &\ddots & \ddots & \vdots\\
     0 & 0 & \dots & c_n & c_{n - 1} \\
     0 & 0 & \dots & 0 & c_n
\end{array}\right) \quad \text{and} \quad Q = \left( \begin{array}{ccccc}
     m_n & m_{n - 1} &\dots  & m_2 & m_1  \\
     0 & m_n & \ddots & & m_2 \\
     &  &\ddots & \ddots & \vdots\\
     0 & 0 & \dots & m_n & m_{n - 1} \\
     0 & 0 & \dots & 0 & m_n
\end{array}\right).
$$
The matrix-valued function $f(P, Q)$ is given by the formula
$$
f(P, Q) = \left( \begin{array}{ccccc}
     f_n(c, m) & f_{n - 1}(c, m) & \dots & f_2(c, m) & f_1 (c, m)  \\
     0 & f_n(c, m) & \ddots & & f_2(c, m) \\
     &  &\ddots & \ddots & \vdots\\
     0 & 0 & \dots & f_n(c, m) & f_{n - 1}(c, m) \\
     0 & 0 & \dots & 0 & f_n(c, m)
\end{array}\right),
$$
where $f_i(c, m)$ are the coefficients of decomposition \eqref{decomp}, depending on $c, m$. 

\begin{Theorem}\label{t2}
Fix the coordinates $u^1, \dots, u^n$ and consider $J$ in the form
$$
J = \left( \begin{array}{ccccc}
     0 & 1 & 0 & \dots & 0 \\
     0 & 0 & 1 & \dots & 0 \\
      & & & \ddots & \\
      0 & 0 & 0 & \ddots & 1\\
      0 & 0 & 0 & \dots & 0
\end{array}\right) 
$$
and a pair of operator fields
$$
P = u^1 J^{n - 1} + \dots + u^n\Id \quad \text{and} \quad Q = (n - 1) u^1 J^{n - 2} + \dots + 2u^{n - 2} J + u^{n - 1} \Id.
$$
Fix an arbitrary collection of $n - 1$ functions of two variables $f_1, \dots, f_{n - 1}$ and one function $f_n$ of a single variable. Define  operator field
\begin{equation}\label{solution}
L = f_1(P, Q) J^{n - 1} + \dots + f_{n - 1}(P, Q) J + f_n(P).     
\end{equation}
By definition, $L$ is in the upper triangular Toeplitz form. Then the operator $L$ is a Nijenhuis operator, and any Nijenhuis operator in upper triangular form for $g_{n - 1} \neq 0$ can be obtained in this manner from the appropriate collection of smooth functions $f_i$ with $f_{n - 1} (0, 0) \neq 0$.   
\end{Theorem}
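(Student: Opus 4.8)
The plan is to turn ``$L$ is Nijenhuis'' into a scalar congruence modulo $t^n$. First I would identify the algebra of upper triangular Toeplitz matrices \eqref{toeplitz} with $C^\infty[t]/(t^n)$ by sending $\sum_{j=1}^n c_j J^{n-j}$ to $\sum_{j=1}^n c_j t^{\,n-j}$; this is a ring isomorphism under which $J\mapsto t$. Then $P\mapsto p(t,u)=u^1t^{n-1}+\dots+u^n$, $Q\mapsto q(t,u)=\partial_t p(t,u)$, and, by the definition of the matrix function recalled before the theorem, $f(P,Q)\mapsto\big[f(p,q)\big]_{<n}$, the degree $<n$ part of the Taylor expansion in $t$. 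Hence the operator $L$ of \eqref{solution} corresponds to $G:=[F]_{<n}$ with $F:=\sum_{r=1}^{n}f_r(p,q)\,t^{\,n-r}$ (reading $f_n(p,q):=f_n(p)$), and the Toeplitz entries $g_j$ of $L$ are the coefficients of $t^{\,n-j}$ in $G$, equivalently in $F$. Writing $\partial_i:=\partial/\partial u^i$, $\partial_0=\partial_{-1}:=0$, and $D_i:=t^2\partial_i-2t\,\partial_{i-1}+\partial_{i-2}$, the basic observation is that applying $J$ to a column of height $n$ is multiplication by $t$ modulo $t^n$, so the $k$-th component of the $i$-th equation of \eqref{eq3} is exactly the coefficient of $t^{\,n-k}$ in $D_iG$; therefore the conditions of Theorem \ref{t1} are \emph{equivalent} to $D_iG\equiv 0\pmod{t^n}$ for $i=1,\dots,n$.

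For the first claim of Theorem \ref{t2} it then suffices to check $D_iF\equiv0\pmod{t^n}$, since $G\equiv F\pmod{t^n}$ and $D_i$ preserves the ideal $(t^n)$. Using $\partial_a p=t^{\,n-a}$ and $\partial_a q=(n-a)t^{\,n-a-1}$, a one-line computation gives, for smooth $h=h(x,y)$ and $m\ge0$,
\[
D_i\!\left(h(p,q)\,t^{m}\right)=h_x\,t^{\,n-i+m+2}(1-2+1)+h_y\,t^{\,n-i+m+1}\big((n-i)-2(n-i+1)+(n-i+2)\big)=0 ,
\]
and in the cases where this is incomplete --- $i\in\{1,2\}$, or $m=0$ (the term $f_n(p)$, where $h_y\equiv0$) --- the surviving summands are seen to lie in $(t^n)$. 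Summing over the $n$ terms of $F$ gives $D_iF\equiv0\pmod{t^n}$, hence \eqref{eq3}, hence $L$ is Nijenhuis by Theorem \ref{t1}. Note that this argument in fact shows that the Toeplitz entries of \emph{any} operator \eqref{solution} solve the linear system \eqref{eq3}.

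For the converse I would establish two properties of solutions $g=(g_1,\dots,g_n)$ of \eqref{eq3}. (i) If $g_n=\dots=g_{r+1}=0$, then $G=t^{\,n-r}\widetilde G$ with $\widetilde G=g_r+g_{r-1}t+\dots+g_1t^{\,r-1}$, and $D_iG\equiv0\pmod{t^n}$ becomes $D_i\widetilde G\equiv0\pmod{t^r}$, whose $t^0$-coefficient is $\partial_{i-2}g_r$; so for $i=3,\dots,n$ one gets $\partial_1g_r=\dots=\partial_{n-2}g_r=0$, i.e. $g_r$ depends only on $u^{n-1},u^n$. (ii) Every solution of \eqref{eq3} has $\partial_i g_j=0$ for $i<j$; in particular $g_n=g_n(u^n)$. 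The point of (ii) is that each scalar equation of \eqref{eq3} couples only the entries $\partial_i g_j$ with a fixed value of $j-i$: fixing $p\ge1$ and setting $a_m:=\partial_m g_{m+p}$ (with $a_m:=0$ when $m\le0$ or $m+p\notin\{1,\dots,n\}$), the equations read $a_{l+2}-2a_{l+1}+a_l=0$ for all admissible $l$, so $(a_m)$ is an arithmetic progression; it vanishes at two boundary indices by convention ($a_0=0$ and $a_n=0$ when $p=1$; $a_{-1}=a_0=0$ when $p\ge2$), forcing $a_m\equiv0$.

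Finally, given a Nijenhuis $L$ in Toeplitz form with $g_{n-1}(0)\ne0$, Theorem \ref{t1} gives \eqref{eq3}; by (ii), $g_n=g_n(u^n)$, so put $f_n:=g_n$ and $L^{(n-1)}:=L-f_n(P)$. Inductively, if $L^{(r)}:=L-\sum_{s>r}f_s(P,Q)J^{\,n-s}$ has Toeplitz entries satisfying \eqref{eq3} (true by linearity, since each subtracted term is an operator \eqref{solution}) and vanishing coefficients of $J^0,\dots,J^{\,n-r-1}$, then the coefficient $\gamma_r$ of $J^{\,n-r}$ in $L^{(r)}$ depends only on $u^{n-1},u^n$ by (i); setting $f_r:=\gamma_r$, read as a function of the pair $(u^n,u^{n-1})$, and $L^{(r-1)}:=L^{(r)}-f_r(P,Q)J^{\,n-r}$ kills the coefficient of $J^{\,n-r}$ (it becomes $\gamma_r-\gamma_r$) and leaves the lower ones zero, since $J^{\,n-r}$ is the lowest power occurring in $f_r(P,Q)J^{\,n-r}$. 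Letting $r$ descend from $n$ to $1$ yields $L^{(0)}=0$, i.e. $L=\sum_{r=1}^n f_r(P,Q)J^{\,n-r}$, and comparing the diagonal and the $J$-coefficient on both sides gives $f_{n-1}(0,0)=g_{n-1}(0)\ne0$. I expect property (ii) to be the only genuine obstacle: one must spot the decoupling of \eqref{eq3} into ``antidiagonal'' chains and that the boundary conventions turn the resulting discrete second-order recursions into vanishing statements; the rest is bookkeeping with $C^\infty[t]/(t^n)$.
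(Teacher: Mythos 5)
Your proof is correct, and while its computational core is the same discrete second--difference identity that drives the paper's argument, both halves are packaged differently. For the direct part the paper proves Lemma \ref{lm4}, replacing \eqref{eq3} by the closed-form recursion \eqref{mod2} and then checking that the entries of \eqref{solution} satisfy it; your operators $D_i$ acting on $C^\infty[t]/(t^n)$ encode exactly the same cancellations $1-2+1=0$ and $(n-i)-2(n-i+1)+(n-i+2)=0$, so this half is essentially a change of notation (a convenient one: it makes transparent, via the exceptional case $m=0$, $i=2$, why $f_n$ must depend on $p$ alone). The genuine divergence is in the converse. The paper restricts the entries $g_i$ to the plane $u^1=\dots=u^{n-2}=0$, solves the triangular system \eqref{stt} to produce the $f_i$, and then invokes the uniqueness Lemma \ref{lm5} (a solution of the linear system vanishing on that plane vanishes identically) to conclude that the constructed operator equals $L$. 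You instead isolate two structural facts about solutions of \eqref{eq3} --- the lower-triangularity $\partial_i g_j=0$ for $i<j$, obtained by decoupling \eqref{eq3} into chains along fixed $j-i$ that must be arithmetic progressions vanishing at two boundary indices, and the two-variable dependence of the lowest nonvanishing coefficient --- and run a descending peeling induction in which each subtraction of $f_r(P,Q)J^{n-r}$ kills the coefficient of $J^{n-r}$ identically (not merely on the plane), so no separate uniqueness lemma is needed. Your property (ii) is not isolated in the paper, though it does follow from \eqref{mod2}; your route buys a self-contained converse at the cost of the chain analysis, while the paper's buys a reusable uniqueness statement of independent interest. The endpoint check $f_{n-1}(0,0)=g_{n-1}(0)\neq 0$ agrees in both treatments.
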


The next example provides an explicit calculation in dimension four.

\begin{Ex}\label{exm}
\rm{
Consider case $n = 4$. Fix three functions of two variables $a, b, c$ and one function of single variable $d$. The polynomials $p(t), q(t)$ in this case are (we use lower indices for coordinates in this example)
$$
p(t) = u_1 t^3 + u_2 t^2 + u_3 t + u_4, \quad q(t) = 3 u_1 t^2 + 2 u_2 t + u_3.
$$
The decomposition of $d(p)$ takes form
$$
\begin{aligned}
d(p(t)) = d(u_4) + u_3 d'(u_4) t + & \Big( d'(u_4) u_2 + \frac{1}{2}d''(u_4) u_3^2\big) t^2 + \\
& + \Big( d'(u_4) u_1 + d''(u_4) u_2 u_3 + \frac{1}{6}d'''(u_4) u_3^3\Big)t^3 + \dots.
\end{aligned}
$$
The decomposition of $a(p, q)$ takes form
$$
\begin{aligned}
a(p(t), q(t)) & = a(u_4, u_3) + \Big(a'_p (u_4, u_3) u_3 + 2 a'_q (u_4, u_3) u_2 \Big)t + \\
& + \Big( a'_p (u_4, u_3)u_2 + 3 a'_q (u_4, u_3)u_1 + \frac{1}{2} a''_{pp} u_3^2 + 2 a''_{pq}u_3 u_2 + 2 a''_{qq} u_2^2\Big)t^2 + \dots.
\end{aligned}
$$
For $b, c$ we have a similar decomposition. For operator fields $P, Q$ we get
$$
d(P) = \left( \begin{array}{cccc}
     d(u_4) & u_3 d'(u_4) & d'(u_4) u_2 + \frac{1}{2}d''(u_4) u_3^2 & d'(u_4) u_1 + d''(u_4) u_4 u_3 + \frac{1}{6}d'''(u_4) u_3^3\\
     0 & d(u_4) & u_3 d'(u_4) & d'(u_4) u_2 + \frac{1}{2}d''(u_4) u_3^2\\
     0 & 0 & d(u_4) & u_3 d'(u_4)\\
     0 & 0 & 0 & d(u_4) \\
\end{array}\right),
$$
$$
c(P, Q) J = \left( \begin{array}{cccc}
     0 & c(u_4, u_3) & c'_p (u_4, u_3) u_3 + 2 c'_q (u_4, u_3) u_2 & \scalebox{0.7}{$
     \begin{aligned}
       c'_p (u_4, u_3)u_2 & + 3 c'_q (u_4, u_3)u_1 + \\
        & + \frac{1}{2} c''_{pp} u_3^2 + 2 c''_{pq}u_3 u_2 + 2 c''_{qq} u_2^2  
     \end{aligned}
          $}\\
     0 & 0 & c(u_4, u_3) & \begin{array}{c}
          c'_p (u_4, u_3) u_3 + 2 c'_q (u_4, u_3) u_2 \\
     \end{array}\\
     0 & 0 & 0 & c(u_4, u_3) \\
     0 & 0 & 0 & 0 \\
\end{array}\right),
$$
$$
b(P, Q) J^2 = \left(\begin{array}{cccc}
     0 & 0 & b(u_4, u_3) & \scalebox{0.7}{$b'_p (u_4, u_3) u_3 + 2 b'_q (u_4, u_3) u_2$}\\
     0 & 0 & 0 & b(u_4, u_3) \\
     0 & 0 & 0 & 0 \\
     0 & 0 & 0 & 0 \\
\end{array}\right), a(P, Q)J^3 = \left(\begin{array}{cccc}
     0 & 0 & 0 & a(u_4, u_3)\\
     0 & 0 & 0 & 0 \\
     0 & 0 & 0 & 0 \\
     0 & 0 & 0 & 0 \\
\end{array}\right).
$$
The general form of $L$ in dimension four is given by formula $L = a(P, Q)J^3 + b(P, Q) J^2 + c(P, Q) J + d(P)$. We do not write it in general matrix form due to the sizes of the formulas. $\blacksquare$
}    
\end{Ex}

\begin{Theorem}\label{t3}
Fix the coordinates $u^1, \dots, u^n$ and let $L$ be an operator field in the upper triangular Toeplitz form \eqref{toeplitz} with the condition $g_{n - 1} \neq 0$. Consider $J, P, Q$ as the same as in theorem \ref{t2}. Fix $n - 1$ functions of two variables $f_1, \dots, f_{n - 1}$ with $f_{n - 1} (0, 0) \neq 0$ and take 
$$
M = f_1(P, Q) J^{n - 1} + \dots + f_{n - 1}(P, Q)J. 
$$
Consider the solution $v^1, \dots, v^n$ of the system
\begin{equation}\label{sys}
\begin{aligned}
M^* \ddd v^i = \ddd v^{i + 1}, \quad i = 1, \dots, n - 1
\end{aligned}    
\end{equation}
with $\pd{v^1}{u^1} \neq 0$. Then 
\begin{enumerate}
    \item Functions $v^i, i = 1, \dots, n$ define coordinate transformation;
    \item This transformation preserves the upper triangular Toeplitz form of $L$;
    \item Any transformation that preserves the upper triangular Toeplitz form of $L$ can be obtained in such a way for an appropriate choice of functions $f_i$. 
\end{enumerate}
\end{Theorem}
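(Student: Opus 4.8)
The first step is to read \eqref{sys} as a matrix identity for the Jacobian $C=\bigl(\partial v^a/\partial u^i\bigr)$. Writing $\ddd v^i$ in the coordinates $u$ as the $i$-th row of $C$, the components of $M^*\ddd v^i$ form the $i$-th row of $CM$, while $\ddd v^{i+1}$ is the $i$-th row of $JC$, since $(JC)_{ij}=C_{i+1,j}$ for $J$ the shift matrix. Hence \eqref{sys} says precisely that the first $n-1$ rows of $CM$ and of $JC$ agree. The $n$-th row of the would-be identity $CM=JC$ reads $M^*\ddd v^n=0$; but iterating \eqref{sys} gives $\ddd v^n=(M^*)^{n-1}\ddd v^1$, and $M$ is upper triangular with zero diagonal, hence nilpotent of order $n$, so $M^*\ddd v^n=(M^*)^n\ddd v^1=0$ for free. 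Thus \eqref{sys} is equivalent to $CM=JC$, i.e.\ to the statement that $M$ has constant matrix $J$ in the coordinates $v$. That such coordinates (hence solutions of \eqref{sys}) exist is where the general theory of \cite{nij1} enters: by Theorem \ref{t2} the operator $M$ is Nijenhuis, and $f_{n-1}(0,0)\neq 0$ makes its superdiagonal nonvanishing near the origin, so $M$ is there a regular nilpotent operator, which by \cite{nij1} is locally reducible to the form $J$.

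For item (1): under \eqref{sys} the rows of $C$ are $\ddd v^1, M^*\ddd v^1,\dots,(M^*)^{n-1}\ddd v^1$. As $M=f_1(P,Q)J^{n-1}+\dots+f_{n-1}(P,Q)J$ is a combination of the positive powers $J,\dots,J^{n-1}$, and the first column of $J$ is zero, we get $M\partial_{u^1}=0$, so $\Ker M=\langle\partial_{u^1}\rangle$ and $\Image M^*=\{\alpha:\alpha(\partial_{u^1})=0\}$. For a regular nilpotent operator the covectors $\alpha,M^*\alpha,\dots,(M^*)^{n-1}\alpha$ are linearly independent iff $(M^*)^{n-1}\alpha\neq 0$, i.e.\ iff $\alpha\notin\Image M^*$, i.e.\ iff $\alpha(\partial_{u^1})\neq 0$; so $\partial v^1/\partial u^1\neq 0$ is exactly the condition $\det C\neq 0$, and $v$ is a coordinate change. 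For item (2): $L$ and $M$ are both upper triangular Toeplitz, i.e.\ polynomials in $J$, hence commute, $[L,M]=0$; conjugating by $C$ and using $CMC^{-1}=J$ gives $[CLC^{-1},J]=0$. Since $J$ is a single nilpotent Jordan block, its centralizer is the algebra of polynomials in $J$, which is exactly the set of upper triangular Toeplitz matrices; hence the matrix $CLC^{-1}$ of $L$ in the coordinates $v$ is upper triangular Toeplitz.

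For item (3), conversely, take any coordinate change $v(u)$ preserving the upper triangular Toeplitz form of $L$, with Jacobian $C$, and set $M:=C^{-1}JC$. This is a Nijenhuis operator, since the Nijenhuis torsion is a tensor and $J$ is Nijenhuis, and it is a regular nilpotent operator, being conjugate to $J$ at every point. In the coordinates $v$ the matrix of $L$ is upper triangular Toeplitz, hence commutes with $J$; transported back to the coordinates $u$, it commutes with $M=C^{-1}JC$. But in the coordinates $u$ the matrix of $L$ is upper triangular Toeplitz with $g_{n-1}\neq 0$, so $L$ is a regular operator whose centralizer is the algebra of polynomials in $L$, which coincides with the polynomials in $J$, i.e.\ the upper triangular Toeplitz matrices. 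Therefore $M$ is upper triangular Toeplitz in the coordinates $u$, and being nilpotent it has zero diagonal and, being regular nilpotent, nonvanishing superdiagonal; so $M=m_1J^{n-1}+\dots+m_{n-1}J$ with $m_{n-1}(0)\neq 0$. By Theorem \ref{t2} there are functions $f_1,\dots,f_{n-1}$ of two variables with $f_{n-1}(0,0)\neq 0$ (and $f_n\equiv 0$) such that $M=f_1(P,Q)J^{n-1}+\dots+f_{n-1}(P,Q)J$; and $CMC^{-1}=J$, i.e.\ $CM=JC$, has first $n-1$ rows exactly \eqref{sys}, while $\partial v^1/\partial u^1\neq 0$ by the argument of item (1). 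So $v$ comes from this choice of $f_i$.

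The main obstacle is the existence of solutions of \eqref{sys} with $\partial v^1/\partial u^1\neq 0$ — equivalently, the local reduction of the regular nilpotent Nijenhuis operator $M$ to the form $J$, which I would quote from \cite{nij1}; a self-contained argument would instead require checking the Frobenius compatibility of \eqref{sys} directly from $\mathcal N_M=0$. A minor additional point is making sure that Theorem \ref{t2} does parametrize all nilpotent (zero-diagonal) upper triangular Toeplitz Nijenhuis operators with nonvanishing superdiagonal, i.e.\ that the case $f_n\equiv 0$ is included. The rest is linear algebra — the centralizer descriptions of $J$ and $L$ and the cyclic-vector property of regular nilpotent operators — together with the tensoriality of the Nijenhuis torsion.
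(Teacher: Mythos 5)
Your proposal is correct and follows essentially the same route as the paper: reading \eqref{sys} as the matrix identity $CM=JC$, deducing item (1) from the triangular/cyclic structure forced by $f_{n-1}(0,0)\neq 0$, proving item (2) from the fact that the centralizer of the $gl$-regular operators $J$ and $M$ is the algebra of polynomials in them (the paper phrases this by explicitly rewriting $L$ as a polynomial in $M$), and proving item (3) by pulling $J$ back to $M=C^{-1}JC$, checking it is a zero-diagonal Toeplitz Nijenhuis operator, and invoking Theorem \ref{t2}. The two points you flag (existence of solutions, which the paper delegates to Theorem \ref{t4}, and the $f_n\equiv 0$ case of Theorem \ref{t2}) are handled the same way in the paper.
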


\begin{Remark}\label{r1}
\rm{
The transformation from $M$ to $J$ from Theorem \ref{t3} is not unique and is defined modulo the transformation, that preserves $J$. They can be constructed as follows: pick $n$ functions $h_1, \dots, h_n$ of single variable with $h_1'(0) \neq 0$. Let $P$ be the same as in theorem \ref{t2}. Consider operator field in the upper triangular Toeplitz form
$$
h_1(P)J^{n - 1} + \dots + h_n(P) \Id = \left( \begin{array}{ccccc}
     w_n & w_{n - 1} &\dots  & w_2 & w_1  \\
     0 & w_n & \ddots & & w_2 \\
     &  &\ddots & \ddots & \vdots\\
     0 & 0 & \dots & w_n & w_{n - 1} \\
     0 & 0 & \dots & 0 & w_n
\end{array}\right). 
$$
Then functions $w(u)$ define a coordinate transformation, that preserves $J$. Note that any transformation, that preserves $J$, can be obtained in such a way. $\blacksquare$
}    
\end{Remark}

The next algorithm allows one to solve the system \eqref{sys}. The initial data of the algorithm is $M$ from Theorem \ref{t3}.
\begin{enumerate}
    \item[Step 0:] Pick an arbitrary function $q(u^n) \neq 0$ and define $v^n = \int_0^{u^n} q(\tau) \ddd \tau$;
    \item[Step 1:] Now assume that we have constructed $v^{n - i}$. If $i = n - 1$, then the algorithm stops, and we have constructed the solution of the system \eqref{sys} with $\pd{v_1}{u^1} \neq 0$. If $i < n - 1$, then continue;
    \item[Step 2:] Construct $\omega$ in the form
    $$
    \omega = \omega_1 \ddd u^1 + \dots + \omega_{n - 1} \ddd u^{n - 1},
    $$
    such that $M^* \omega = \ddd v^{n - i}$. This is a solution to a triangular system of linear equations (in the algebraic sense).
\item[Step 3:] Construct
     \begin{equation}\label{form2}
     \begin{aligned}
     v^{n - i - 1} = \int_0^{u^1} \omega_1(t, &  u^2, \dots, u^n) \ddd t + \int_0^{u^2} \omega_2(0, t, \dots, u^n) \ddd t  + \dots + \\
     & + \int_0^{u^{n - 1}} \omega_{n - 1}(0, 0, \dots, t,  u^n) \ddd t  + r(u^n).    
     \end{aligned} 
    \end{equation}
    Here the choice of $r(u^n)$ is arbitrary.
    \item[Step 4:] Go to step 1.
\end{enumerate}

The algorithm implies that the system \eqref{sys} can be solved explicitly, using only integrations.

\begin{Theorem}\label{t4}
Fix coordinates $u^1, \dots, u^n$ and let $M$ be an operator constructed in Theorem \ref{t3}. Then
\begin{enumerate}
    \item The algorithm is correct. That is, for any initial data, it produces the collection of functions $v^1, \dots, v^n$;
    \item Functions $v^1, \dots, v^n$ satisfy the system \eqref{sys} in Theorem \ref{t3} with the condition $\pd{v_1}{u_1} \neq 0$;
    \item All the solutions of the system \eqref{sys} in Theorem \ref{t3} with the condition $\pd{v_1}{u_1} \neq 0$ can be obtained via the appropriate choice of the function $q$ and the "constants of integration" in the formula \eqref{form2}.
\end{enumerate}
\end{Theorem}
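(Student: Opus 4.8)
\emph{Proof plan.} All of the discussion below is local near the coordinate origin. I would first record the algebraic anatomy of $M$: since $M=JN$ with $N=f_{n-1}(P,Q)+f_{n-2}(P,Q)J+\dots+f_{1}(P,Q)J^{n-2}$, the operator $N$ is again in upper triangular Toeplitz form with diagonal entry $f_{n-1}(u^{n},u^{n-1})$, hence invertible because $f_{n-1}(0,0)\neq0$. It follows that $\Ker M^{*}=\mathrm{span}(\ddd u^{n})$, that $\Image M^{*}=\{\,\eta:\eta_{1}=0\,\}$, that $\Image M=\Ker(\ddd u^{n})=\mathrm{span}(\partial_{u^{1}},\dots,\partial_{u^{n-1}})$, and that the system $M^{*}\omega=\eta$ with $\omega=\omega_{1}\ddd u^{1}+\dots+\omega_{n-1}\ddd u^{n-1}$ has a unique solution precisely when $\eta_{1}=0$; that solution is exactly the triangular recursion of Step 2, all of whose pivots equal $f_{n-1}(u^{n},u^{n-1})\neq0$. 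Thus Step 2 is legitimate whenever $\ddd v^{n-i}$ has no $\ddd u^{1}$-component.

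The core is an induction on the completed rounds of the algorithm: I would show that after $v^{n},v^{n-1},\dots,v^{n-i}$ have been built, one has (a) $v^{n-i}$ depends only on $u^{n-i},\dots,u^{n}$; (b) $M^{*}\ddd v^{n-i+m}=\ddd v^{n-i+m+1}$ for $m=0,\dots,i-1$; and (c) $\pd{v^{n-i}}{u^{n-i}}=q(u^{n})\,f_{n-1}(u^{n},u^{n-1})^{-i}$. The base case $i=0$ is Step 0. For the inductive step, (a) and $n-i\geq2$ (valid inside the loop) give $\pd{v^{n-i}}{u^{1}}=0$, so Step 2 produces a unique $\omega$ with $\omega_{n}=0$; the triangular solve in addition forces $\omega_{j}=0$ for $j<n-i-1$ and $f_{n-1}(u^{n},u^{n-1})\,\omega_{n-i-1}=\pd{v^{n-i}}{u^{n-i}}$. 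The crucial fact is that $\ddd\omega$ lies in the ideal generated by $\ddd u^{n}$, i.e. $\ddd\omega(MX,MY)=0$ for all vector fields $X,Y$ (the two conditions being equivalent because $\Image M=\Ker(\ddd u^{n})$). Granting this, $\pd{\omega_{a}}{u^{b}}=\pd{\omega_{b}}{u^{a}}$ for $a,b\leq n-1$, and a routine computation (telescoping of one-dimensional integrals together with Fubini) shows that \eqref{form2} then produces a $v^{n-i-1}$ with $\pd{v^{n-i-1}}{u^{j}}=\omega_{j}$ for $j=1,\dots,n-1$; hence $M^{*}\ddd v^{n-i-1}=M^{*}\omega=\ddd v^{n-i}$ because $M^{*}\ddd u^{n}=0$. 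This extends (b) by one; the vanishing of $\omega_{1},\dots,\omega_{n-i-2}$ yields (a); and the pivot identity yields (c). Consequently the algorithm never stalls (part 1), its output obeys \eqref{sys}, and iterating (c) down to $i=n-1$ gives $\pd{v^{1}}{u^{1}}=q(u^{n})\,f_{n-1}(u^{n},u^{n-1})^{-(n-1)}\neq0$ (part 2).

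The point I expect to be the real obstacle is the vanishing $\ddd\omega(MX,MY)=0$; this is where Theorem \ref{t2} (the fact that $M$ is Nijenhuis) enters. I would prove it by a direct computation. From $M^{*}\omega=\ddd v^{n-i}$ one has $\omega(MZ)=Z(v^{n-i})$ for every vector field $Z$, while $\mathcal N_{M}\equiv0$ is the identity $[MX,MY]=M\bigl([MX,Y]+[X,MY]-M[X,Y]\bigr)$, so that $\omega([MX,MY])=\bigl([MX,Y]+[X,MY]-M[X,Y]\bigr)(v^{n-i})$. Plugging these into $\ddd\omega(MX,MY)=(MX)\bigl(\omega(MY)\bigr)-(MY)\bigl(\omega(MX)\bigr)-\omega([MX,MY])$ and expanding the three commutators acting on $v^{n-i}$, after cancellation of the repeated second-derivative terms one is left with $\ddd\omega(MX,MY)=-\ddd\bigl(M^{*}\ddd v^{n-i}\bigr)(X,Y)$. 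For $i\geq1$ this equals $-\ddd\ddd v^{n-i+1}=0$ by (b); for $i=0$ one has $v^{n}=v^{n}(u^{n})$ and $\ddd u^{n}\in\Ker M^{*}$, whence $M^{*}\ddd v^{n}=0$ and the right-hand side again vanishes.

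Finally, for part 3, I would take an arbitrary solution $v^{1},\dots,v^{n}$ of \eqref{sys} with $\pd{v^{1}}{u^{1}}\neq0$, normalised so that $v^{j}(0)=0$. A repetition of the argument of the second paragraph, now read along \eqref{sys}, gives $\pd{v^{j}}{u^{1}}=0$ for $j\geq2$, and then inductively that $v^{j}$ depends only on $u^{j},\dots,u^{n}$ and $\pd{v^{j}}{u^{j}}=f_{n-1}(u^{n},u^{n-1})^{\,j-1}\pd{v^{1}}{u^{1}}$, which is nonzero near the origin; in particular $v^{n}=v^{n}(u^{n})$. Running the algorithm with $q:=\pd{v^{n}}{u^{n}}$ reproduces $v^{n}$. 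If it has reproduced $v^{n-i}$, then by uniqueness the $\omega$ of Step 2 must be $\ddd v^{n-i-1}-\pd{v^{n-i-1}}{u^{n}}\ddd u^{n}$ — this $1$-form has $\omega_{n}=0$ and is sent by $M^{*}$ to $M^{*}\ddd v^{n-i-1}=\ddd v^{n-i}$ — so $\omega_{j}=\pd{v^{n-i-1}}{u^{j}}$ for $j\leq n-1$, and by \eqref{form2} the algorithm's output at this round agrees with the true $v^{n-i-1}$ up to a function of $u^{n}$, which is exactly the freedom encoded by the constant of integration $r(u^{n})$. Choosing it suitably reproduces $v^{n-i-1}$, and the induction finishes the proof.
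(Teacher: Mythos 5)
Your proposal is correct and follows essentially the same route as the paper: the heart of both arguments is the observation that for the Nijenhuis operator $M$, closedness of $M^{*}\omega$ and $(M^{*})^{2}\omega$ forces $\ddd\omega(M\cdot\,,M\cdot)=0$ (the paper's Lemma~\ref{krp1}, quoted there from the action of $\mathcal N_{M}$ on $1$-forms), which legitimizes the integral formula \eqref{form2} round by round, and part 3 is settled in both cases by matching restrictions to the $u^{n}$-axis. The only substantive differences are that you derive the key identity $\ddd\omega(M\xi,M\eta)=-\ddd\bigl((M^{*})^{2}\omega\bigr)(\xi,\eta)$ by hand from $\mathcal N_{M}=0$ instead of citing it, and you finish part 3 by a direct induction using uniqueness of the triangular solve, where the paper subtracts the two solutions and invokes linearity together with its Lemma~\ref{krp2}; both versions are sound, and your explicit bookkeeping of the triangular dependence of $v^{n-i}$ and of the pivots $f_{n-1}^{-i}q$ is consistent with what the paper leaves implicit.
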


Let us give the example of the application of our algorithm:
\begin{Ex}\label{ex2}
\rm{
Fix coordinates $u_1, u_2, u_3, u_4$ (we use lower indices for coordinates and slightly different notations then in the proof of the Theorem \ref{t4}) and start with initial data for $M$. The preliminary calculations were performed in example \eqref{ex1}, so we just write the result for the parameter $d \equiv 0$:
$$
M = a(P, Q)J^3 + b(P, Q) J^2 + c(P, Q) J = \left( \begin{array}{cccc}
     0 & c(u_4, u_3) & r(u_4, u_3, u_2) & m(u_4, u_3, u_2, u_1)  \\
     0 & 0 & c(u_4, u_3) & r(u_4, u_3, u_2) \\
     0 & 0 & 0 & c(u_4, u_3) \\
     0 & 0 & 0 & 0
\end{array}\right),
$$
where
$$
\begin{aligned}
r(u_4, u_3, u_2) & = c'_p (u_4, u_3) u_3 + 2 c'_q (u_4, u_3) u_2 + b(u_4, u_3), \\
m(u_4, u_3, u_2, u_1) & = c'_p (u_4, u_3)u_2 + 3 c'_q (u_4, u_3)u_1 + \frac{1}{2} c''_{pp} u_3^2 + \\
& + 2 c''_{pq}u_3 u_2 + 2 c''_{qq} u_2^2 + b'_p (u_4, u_3) u_3 + 2 b'_q (u_4, u_3) u_2 + a(u_4, u_3).
\end{aligned}
$$
We start with step 0. We get $v_4 = \int_0^{u_4} q(\tau) \ddd \tau$. We pass the test in step 1 and move to the step 2. In the 1-form $\omega^3$ the only non zero coefficient is $\omega_3$. It has the form
$$
\omega^4_3 = \frac{s_4(u_4)}{c(u_4, u_3)}
$$
for some non-zero function $s_4$ of a single variable. The integral formula for $v_3$ yields
$$
v_3(u_4, u_3) = \int_0^{u_3} \frac{s_4(u_4)}{c(u_4, \tau)} \ddd \tau + s_3 (u_4),
$$
where $s_3$ is an arbitrary function of single variable. We again pass the test in step 2 and go the step 3. In this case $\omega^2$ is found from the system of linear equations $M^* \omega^2 = \ddd v_3$
$$
\begin{aligned}
\begin{aligned}
& c(u_4, u_3) \omega^2_2 = \pd{v_3}{u_3}, \\
& r(u_4, u_3, u_2) \omega^2_2 + c(u_4, u_3) \omega^2_3 = \pd{v_3}{u_4}.
\end{aligned}
\end{aligned}
$$
Resolving this system, we get
$$
\omega^2_2 = \frac{s_4(u_4)}{c^2(u_4, u_3)} \quad \text{and} \quad \omega^2_3 = \frac{1}{c(u_4, u_3)} \Bigg( \int_0^{u_3}\Big( \frac{s_4(u_4)}{c(u_4, \tau)}\Big)'_{u_4} \ddd \tau + s'_3(u_4) - r(u_4, u_3, u_2) \frac{s_4(u_4)}{c^2(u_4, u_3)}\Bigg).
$$
The integral formula for $v_2$ in this case is
$$
\begin{aligned}
v_2(u_4, u_3, u_2) & = \frac{s_4(u_4)}{c^2(u_3, u_3)}u^2 + s_2(u_4) + \\
& + \int^{u_3}_0 \Bigg[ \frac{1}{c(\rho, u_4)} \Bigg( \int_0^{\rho}\Big( \frac{s_4(u_4)}{c(u_4, \tau)}\Big)'_{u_4} \ddd \tau + s'_3(u_4) - r(u_4, \rho, 0) \frac{s_4(u_4)}{c^2(u_4, \rho)}\Bigg)\Bigg] \ddd \rho .   
\end{aligned}
$$
Finally, during our last iteration we arrive to the following equation on components of $\omega^1$:
$$
\begin{aligned}
\begin{aligned}
& c(u_4, u_3) \omega^1_1 = \pd{v_2}{u_2}, \\
& r(u_4, u_3, u_2) \omega^1_1 + c(u_4, u_3) \omega^1_2 = \pd{v_2}{u_3}, \\
& m(u_4, u_3, u_2, u_1) \omega^1_1 + r(u_4, u_3, u_2) \omega^1_2 + c(u_4, u_3) \omega^1_3 = \pd{v_2}{u_4}.
\end{aligned}
\end{aligned}
$$
Resolving this system, we get (we do not substitute the formulas for $\pd{v_2}{u_3}$ and $\pd{v_2}{u_2}$ due to the enormous complexity)
$$
\begin{aligned}
\omega^1_1 & = \frac{s_4(u_4)}{c^3(u_4, u_3)}, \\
\omega^1_2 & = \frac{1}{c(u_4, u_3)} \pd{v_2 (u_4, u_3, u_2)}{u_3} - r(u_4, u_3, u_2)\frac{s_4(u_4)}{c^4(u_4, u_3)}, \\
\omega^1_3 & = \frac{1}{c(u_4, u_3)} \pd{v_2 (u_4, u_3, u_2)}{u_4} - \frac{r(u_4, u_3, u_2)}{c^2(u_4, u_3)} \pd{v_2 (u_4, u_3, u_2)}{u_3} + \\
& + r^2(u_4, u_3, u_2)\frac{s_4(u_4)}{c^5(u_4, u_3)} - m(u_4, u_3, u_2, u_1) \frac{s_4(u_4)}{c^4(u_4, u_3)}.
\end{aligned}
$$
The integral for function $v_1$ takes form
$$
\begin{aligned}
v_1(u_4, u_3, u_2, u_1) & = \frac{s_4(u_4)}{c^3(u_4, u_3)} u_1 + \frac{1}{c(u_4, u_3)} \int^{u_2}_0 \pd{v_2(u_4, u_3, \tau)}{u_3} \ddd \tau - \frac{s_4(u_4)}{c^4(u_4, u_3)} \int_0^{u_2} r(u_4, u_3, \tau) \ddd \tau + \\
& + \int_0^{u_3} \frac{1}{c(u_4, \tau)} \pd{v_2 (u_4, \tau, 0)}{u_4} \ddd \tau - \int_0^{u_3} \frac{r(u_4, \tau, 0)}{c^2(u_4, \tau)} \pd{v_2 (u_4, \tau, 0)}{u_3} \ddd \tau + \\
& + \int_0^{u_3} r^2(u_4, \tau, 0)\frac{s_4(u_4)}{c^5(u_4, \tau)} \ddd \tau - \int_0^{u_3} m(u_4, \tau, 0, 0) \frac{s_4(u_4)}{c^4(u_4, \tau)} \ddd \tau \;+ s_1(u^4) .
\end{aligned}
$$
One can see that the complexity of formulas rises fast. $\blacksquare$
}    
\end{Ex}


\section{Proof of Theorem \ref{t1}}\label{proof1}

We start the proof by recalling the notion of $gl-$regularity. We say that a linear operator $L: V^n \to V^n$ is $gl-$regular if one of the following equivalent conditions holds:
\begin{enumerate}
    \item For any $M$, such that $LM - ML = 0$, there exist constants $g_1, \dots, g_n$, such that $M = g_1 L^{n - 1} + \dots + g_n \Id$;
    \item There exists a vector $\xi$ such that $\xi, L\xi, \dots, L^{n - 1}\xi$ are linearly independent. The corresponding vector is called cyclic;
    \item For every eigenvalue of $L$, there is exactly one Jordan block that corresponds to it.
\end{enumerate}
For $L$ in the upper triangular Toeplitz form, the $gl$-regularity is equivalent to the fact that $L$ is similar to the Jordan block of maximal size. Equivalently, this is the condition $g_{n - 1} \neq 0$.

Let $L, M$ be a pair of operator fields. Define 
\begin{equation}\label{ii:eq2}
\langle L, M \rangle (\xi, \eta) = LM[\xi, \eta] + [L\xi, M\eta] - L[\xi, M\eta] - M[L\xi, \eta].
\end{equation}
The r.h.s. of this formula defines a tensor field of type $(1, 2)$ if and only if $LM - ML = 0$ (formula 3.9 in \cite{nijenhuis}). We start with a technical lemma related to this operation. 

\begin{Lemma}\label{lm0}
Let $L, M$ be a pair of operator fields such that $LM - ML = 0$ and $f$ are arbitrary functions. Then the following tensor identities hold
\begin{equation}\label{ii:tensor}
    \begin{aligned}
       & \langle f L, M \rangle = f \langle L, M \rangle + M L \otimes \ddd f - L \otimes M^* \ddd f, \\
       & \langle L, f M \rangle = f \langle L, M \rangle + L^* \ddd f \otimes M - \ddd f \otimes ML. \\
\end{aligned}
\end{equation}    
\end{Lemma}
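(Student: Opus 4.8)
The plan is to verify the two identities in \eqref{ii:tensor} by a direct computation that unwinds the definition \eqref{ii:eq2} and exploits bilinearity of the commutator $[\cdot,\cdot]$ over functions. Since we already know from \cite{nijenhuis} that each of $\langle fL, M\rangle$, $\langle L, fM\rangle$, and $\langle L,M\rangle$ is a genuine $(1,2)$-tensor (the commutativity hypothesis $LM=ML$ guarantees this, and $fL$ still commutes with $M$), it suffices to check the identities by evaluating both sides on an arbitrary pair of vector fields $\xi,\eta$; there is no need to track derivative terms that would otherwise spoil tensoriality, because they must cancel on the nose. Concretely, I would compute $\langle fL,M\rangle(\xi,\eta)$ term by term using the Leibniz-type rule $[f X, Y] = f[X,Y] - (Yf)\,X$ and $[X, fY] = f[X,Y] + (Xf)\,Y$.

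First I would expand $\langle fL, M\rangle(\xi,\eta) = fL M[\xi,\eta] + [fL\xi, M\eta] - fL[\xi, M\eta] - M[fL\xi, \eta]$. The first and third terms are already $f$ times the corresponding terms of $\langle L,M\rangle(\xi,\eta)$. For the second term, $[fL\xi, M\eta] = f[L\xi, M\eta] - (M\eta)(f)\, L\xi$; for the fourth, $M[fL\xi,\eta] = M\big(f[L\xi,\eta] - \eta(f) L\xi\big) = f M[L\xi,\eta] - \eta(f)\, ML\xi$. Collecting the $f$-proportional pieces reconstitutes $f\,\langle L,M\rangle(\xi,\eta)$, and the remaining terms are $-(M\eta)(f)\,L\xi + \eta(f)\, ML\xi = -\big(\ddd f(M\eta)\big) L\xi + \big(\ddd f(\eta)\big) ML\xi$. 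Recognizing $\ddd f(M\eta) = (M^*\ddd f)(\eta)$, this equals $\big(ML\otimes \ddd f - L\otimes M^*\ddd f\big)(\xi,\eta)$, which is exactly the claimed correction term; note it is automatically symmetric-free in the sense that it depends only on $\eta$, consistent with the fact that $[\xi,\eta]$-antisymmetry is restored by tensoriality. The second identity is entirely analogous: expand $\langle L, fM\rangle(\xi,\eta)$, use the Leibniz rules on $[\xi, fM\eta]$ and $[L\xi, fM\eta]$, observe $L[\xi, fM\eta] = fL[\xi,M\eta] + \xi(f)\, L M\eta$ and $[L\xi, fM\eta] = f[L\xi, M\eta] + (L\xi)(f)\, M\eta$, and collect to get the correction $L^*\ddd f\otimes M - \ddd f\otimes ML$.

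The only point requiring a little care — and the step I expect to be the main (minor) obstacle — is bookkeeping the placement of the differential-of-$f$ factors as $\otimes$-slots versus arguments, i.e. making sure that a term like $\eta(f)\,ML\xi$ is correctly read as $(ML\otimes \ddd f)(\xi,\eta) = (ML\,\xi)\,\ddd f(\eta)$ and similarly that $(M\eta)(f)\,L\xi$ becomes $(L\otimes M^*\ddd f)(\xi,\eta)$, using $\langle M^*\ddd f, \eta\rangle = \langle \ddd f, M\eta\rangle$. One should also double-check that the hypothesis $LM = ML$ is genuinely used — it enters when one wants the bare object $\langle L,M\rangle$ on the right-hand side to be tensorial, and it is implicitly what makes $ML = LM$ interchangeable inside the correction terms — but for the pointwise identity itself the commutativity is not needed in the algebra, only in interpreting the equality as an identity of tensor fields. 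Once these identifications are made, both lines of \eqref{ii:tensor} drop out, completing the proof.
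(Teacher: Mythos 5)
Your proposal is correct and follows essentially the same route as the paper: a direct expansion of $\langle fL,M\rangle(\xi,\eta)$ and $\langle L,fM\rangle(\xi,\eta)$ via the Leibniz rule $[fX,Y]=f[X,Y]-(Yf)X$, with the leftover derivative terms identified as the tensor-product corrections. Your bookkeeping of the $\otimes$-slots and your remark on where $LM=ML$ enters (tensoriality, and $LM\eta=ML\eta$ in the correction term) match the paper's argument.
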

\begin{proof}
For arbitrary vector fields $\xi, \eta$, we get
$$
\begin{aligned}
 \langle f L, M \rangle (\xi, \eta) & = f LM[\xi, \eta] + [f L\xi, M\eta] - f L[\xi, M\eta] - M[f L\xi, \eta] = \\
 & = f \langle L, M \rangle(\xi, \eta) + \mathcal L_\eta f \, ML \xi - \mathcal L_{M\eta} L\xi = \\
 & = (f\langle L, M \rangle + ML \otimes \ddd f - L \otimes M^*\ddd f) (\xi, \eta).
\end{aligned}
$$
Here $[\xi, \eta]$ stands for the standard commutator, $\mathcal L_\xi$ is a Lie derivative, and $\mathcal L_\xi \eta = [\xi, \eta]$. This proves the first property. The second is proved in a similar manner.
\end{proof}

Now fix coordinates and consider $L$ in the form \eqref{toeplitz}. Define tensor fields $T_L$ and $M_L$ of type $(1, 2)$:
$$
\begin{aligned}
T_L & = \ddd g_1 \otimes J^{n - 1} + \dots + \ddd g_n \otimes \Id + J^{n - 1} \otimes \ddd g_1 + \dots + \Id \otimes \ddd g_n, \\
M_L & = J^*\ddd g_1 \otimes J^{n - 1} + \dots + J^*\ddd g_n \otimes \Id + J^{n - 1} \otimes \ddd g_2 + \dots + J \otimes \ddd g_n - \\
& - \ddd g_2 \otimes J^{n - 1} - \dots - \ddd g_n \otimes J - J^{n - 1} \otimes J^* \ddd g_1 - \dots - \Id \otimes J^*\ddd g_n.
\end{aligned}
$$
Here $J$ is \eqref{jordan}. The tensor $T_L$ is symmetric in lower indices, while $M_L$ is skew symmetric in lower indices, and $$ M_L (\xi, \eta) = T_L(J\xi, \eta) - T_L(\xi, J \eta)
$$
is defined for arbitrary vector fields $\xi, \eta$. The following lemma holds.

\begin{Lemma}\label{lm1}
If $M_L = 0$, then $L$ is Nijenhuis. If $g_{n - 1} \neq 0$ holds locally, then the condition is also necessary.
\end{Lemma}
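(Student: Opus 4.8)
The natural entry point is the identity $\mathcal N_L=\langle L,L\rangle$, which is immediate from comparing \eqref{nij} with \eqref{ii:eq2}, together with the fact that $\langle\,\cdot\,,\,\cdot\,\rangle$ is additive in each argument and vanishes on any pair of constant pairwise commuting operators (a one-line check in coordinates, since then every bracket in \eqref{ii:eq2} is zero). Writing $L=\sum_{i=1}^{n}g_iJ^{n-i}$ — the diagonal term being $g_nJ^0=g_n\Id$ — I would expand
\[
\mathcal N_L \;=\; \langle L,L\rangle \;=\; \sum_{i,j=1}^{n}\langle g_iJ^{n-i},\,g_jJ^{n-j}\rangle
\]
and apply both identities of Lemma \ref{lm0} to each summand: first pull $g_i$ out of the left slot, then $g_j$ out of the right slot, the core term $\langle J^{n-i},J^{n-j}\rangle$ dropping out. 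After relabelling $i\leftrightarrow j$ in two of the four resulting families of terms and collecting, the products $g_ig_j$ and every second-order contribution cancel, and — matching the survivors against $T_L=\sum_j(\ddd g_j\otimes J^{n-j}+J^{n-j}\otimes \ddd g_j)$ — the whole sum reassembles into
\[
\mathcal N_L(\xi,\eta) \;=\; \sum_{i=1}^{n}g_i\Bigl(T_L(J^{n-i}\xi,\eta)-T_L(\xi,J^{n-i}\eta)\Bigr),
\]
the $i=n$ term being trivially zero.

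Next I would record the purely telescoping identity, valid for every $k\ge 1$ and using only $M_L(\xi,\eta)=T_L(J\xi,\eta)-T_L(\xi,J\eta)$ (no symmetry of $T_L$ needed):
\[
T_L(J^k\xi,\eta)-T_L(\xi,J^k\eta) \;=\; \sum_{a=0}^{k-1}M_L(J^a\xi,\,J^{k-1-a}\eta),
\]
proved by induction on $k$, peeling one $J$ off the left slot. Substituting this into the previous display and putting $k=n-i$ gives
\[
\mathcal N_L(\xi,\eta) \;=\; \sum_{k=1}^{n-1}g_{n-k}\sum_{a=0}^{k-1}M_L(J^a\xi,\,J^{k-1-a}\eta).
\]
The sufficiency half of the lemma is now immediate: if $M_L=0$ then $\mathcal N_L=0$, i.e.\ $L$ is Nijenhuis. (Written in coordinates, $M_L=0$ is exactly the system \eqref{eq1}, which is how Theorem \ref{t1} is extracted.)

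For necessity, assume $\mathcal N_L\equiv 0$ and $g_{n-1}\neq 0$, and deduce $M_L=0$ from the last display by a double induction over the coordinate frame. Since $J$ is the lower shift, $J^a\partial_p=\partial_{p-a}$ for $a<p$ while $J^a\partial_p=0$ for $a\ge p$; evaluating the identity on $\xi=\partial_p$, $\eta=\partial_q$ therefore annihilates every term with $a\ge p$, and under the outer induction hypothesis (that $M_L(\partial_{p'},\,\cdot\,)=0$ for all $p'<p$) the remaining terms with $1\le a\le p-1$ vanish as well. What is left is a linear system in $q$, triangular with constant diagonal coefficient $g_{n-1}\neq 0$ and ``diagonal input'' $M_L(\partial_p,\partial_p)=0$ from the skew-symmetry of $M_L$; an inner induction on $q$ then forces $M_L(\partial_p,\partial_q)=0$, completing the step. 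This is the only place the regularity $g_{n-1}\neq 0$ is used, and Example \ref{ex1} shows it cannot be dropped.

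The main obstacle is the first computation: the double application of Lemma \ref{lm0} generates on the order of $4n^2$ terms, and the real work lies in checking that, after the symmetrising relabelling, all products $g_ig_j$ and all terms carrying two derivatives of the $g_i$ cancel, leaving precisely the single $T_L$-expression above. Everything downstream — the telescoping identity, the sufficiency, and the inductive necessity argument — is then routine bookkeeping.
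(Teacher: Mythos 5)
Your proof is correct, and while it opens with the same key computation as the paper, it closes the argument by a genuinely different route. Both proofs start from $\mathcal N_L=\langle L,L\rangle$ and use Lemma \ref{lm0} to reduce the double sum to the identity $\langle L,L\rangle(\xi,\eta)=T_L(L\xi,\eta)-T_L(\xi,L\eta)$ (your form $\sum_i g_i\bigl(T_L(J^{n-i}\xi,\eta)-T_L(\xi,J^{n-i}\eta)\bigr)$ is the same statement by tensoriality of $T_L$). From there the paper argues abstractly: $M_L=0$ means $J$ is self-adjoint with respect to $T_L$, which transfers to $L=\sum g_iJ^{n-i}$, giving sufficiency; for necessity it invokes $gl$-regularity ($g_{n-1}\neq0$) to write $J=p(L)$ for a polynomial with functional coefficients and transfers self-adjointness back from $L$ to $J$. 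You instead telescope $T_L(J^k\xi,\eta)-T_L(\xi,J^k\eta)=\sum_{a}M_L(J^a\xi,J^{k-1-a}\eta)$ to get $\mathcal N_L$ as an explicit $g$-weighted linear combination of values of $M_L$, from which sufficiency is immediate, and you prove necessity by a double induction over the coordinate frame, using the shift action $J\partial_p=\partial_{p-1}$ and the fact that $g_{n-1}$ is the diagonal coefficient of the resulting triangular system. Your necessity argument is more elementary and self-contained — it never needs the polynomial relation $J=p(L)$ or the $gl$-regularity dictionary — at the cost of being a frame-dependent computation rather than an invariant one. Two small remarks: the cancellations you brace yourself for do not actually occur — $\langle\,\cdot\,,\cdot\,\rangle$ produces only first derivatives, and the $g_ig_j$ products die instantly because $\langle J^{n-i},J^{n-j}\rangle=0$ for constant commuting operators — so the expansion is lighter than you anticipate; and the skew-symmetry of $M_L$ is not needed in your inner induction, since the triangular system already starts at $q=1$ with $g_{n-1}M_L(\partial_p,\partial_1)=0$.
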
    
\begin{proof}
Notice that the vanishing of $M_L$ is equivalent to the fact that $J$ is self-adjoint with respect to $T_L$. So, let us prove this. Define
$$
A = \ddd g_1 \otimes J^{n - 1} + \dots + \ddd g_n \otimes \Id, \quad B = J^{n - 1} \otimes \ddd g_1  + \dots + \Id \otimes \ddd g_n.
$$
We have
$$
A(L\xi, \eta) + B(L \xi, \eta) = T_L(L\xi, \eta) \quad \text{and} \quad A(\xi, L\eta) + B(\xi, L\eta) = T_L(\xi, L\eta).
$$
Now consider
$$
\begin{aligned}
\langle L, L \rangle & = \langle \sum_{i = 1}^n g_i J^{n - i}, L \rangle = \sum_{i = 1}^n \Bigg( g_i \langle J^{n - i}, L \rangle + LJ^{n - i} \otimes \ddd g_i - J^{n - i} \otimes L^* \ddd g_i\Bigg) = \\
& = \sum_{i = 1}^n \sum_{j = 1}^n \Bigg(g_i J^{* n - i} \ddd g_j \otimes J^{n - j} -  g_i \ddd g_j \otimes J^{n - i} J^{n - j}\Bigg) + B(L \cdot, \cdot) - B(\cdot, L\cdot)  = \\
& = A(L\cdot, \cdot) - A(\cdot, L \cdot) + B(L \cdot, \cdot) - B(\cdot, L\cdot) = T_L(L\cdot, \cdot) - T_L(\cdot, L\cdot).
\end{aligned}
$$
Here we used Lemma \eqref{lm0}. Thus, $\langle L, L \rangle = 0$ if and only if
$$
T_L(L\xi, \eta) = T_L(\xi, L\eta)
$$
for all vector fields $\xi, \eta$. At the same time (as it was mentioned in the beginning of the proof) $M_L = 0$ implies that $T_L(J\xi, \eta) = T_L(\xi, J\eta)$, which in turn implies the above condition. Thus, the vanishing of $M_L$ is a sufficient condition for $L$ in the upper triangular Toeplitz form to be Nijenhuis.

If $g_{n - 1} \neq 0$, then $L$ is similar to the Jordan block at each point. As $LJ - JL = 0$, we find that there exists a polynomial $p(t)$ with functional coefficients such that $p(L) = J$. If $L$ is self-adjoint with respect to $T_L$, then $J$ is self-adjoint with respect to $T_L$. 

Thus, $L$ is Nijenhuis if and only if $J$ is self-adjoint with respect to $T_L$. Due to the comment in the beginning of the proof, this implies the statement of the lemma \ref{lm1}.
\end{proof}

\begin{Lemma}\label{lm2}
The condition $M_L = 0$ for $L$ in the upper triangular Toeplitz form is equivalent to the conditions \eqref{eq1}.
\end{Lemma}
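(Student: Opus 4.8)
The plan is to compute $M_L$ in coordinates, repackage its coefficients into a single chain of $1$-forms, and observe that the vanishing of $M_L$ is literally the recursion \eqref{eq1} written in those terms.

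I would first expand $M_L(\xi,\eta)=T_L(J\xi,\eta)-T_L(\xi,J\eta)$ using $T_L=\sum_i\ddd g_i\otimes J^{n-i}+\sum_i J^{n-i}\otimes\ddd g_i$, the elementary identities $\ddd g_i(J\xi)=(J^*\ddd g_i)(\xi)$ and $J^{n-i}(J\xi)=J^{n-i+1}\xi$, and $J^n=0$. Collecting terms and setting $\beta_i:=J^*\ddd g_i-\ddd g_{i+1}$ (with the convention $\ddd g_{n+1}:=0$) gives the compact form
\[
M_L(\xi,\eta)=\sum_{i=1}^n\beta_i(\xi)\,J^{n-i}\eta-\sum_{i=1}^n\beta_i(\eta)\,J^{n-i}\xi ,
\]
which is just a regrouping of the explicit expression for $M_L$ stated before the lemma. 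Consequently $M_L=0$ if and only if the vector-valued bilinear form $\Psi(\xi,\eta):=\sum_i\beta_i(\xi)J^{n-i}\eta$ is symmetric.

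Next I would trivialize this symmetry using that $J$ is $gl$-regular with cyclic vector $\partial_n$ (indeed $J^{n-i}\partial_n=\partial_i$): every $\xi$ is uniquely $\phi_\xi(J)\partial_n$ for a polynomial $\phi_\xi$ of degree $<n$, and $A\mapsto A\partial_n$ is injective on $\R[J]\cong\R[t]/(t^n)$. Writing $p_\xi(t):=\sum_i\beta_i(\xi)t^{n-i}$ one has $\Psi(\xi,\eta)=p_\xi(J)\phi_\eta(J)\partial_n$, so $\Psi$ is symmetric iff $p_\xi(t)\phi_\eta(t)\equiv p_\eta(t)\phi_\xi(t)\pmod{t^n}$ for all $\xi,\eta$; taking $\eta=\partial_n$ (so $\phi_\eta=1$) forces $p_\xi(t)\equiv r(t)\phi_\xi(t)\pmod{t^n}$ with $r:=p_{\partial_n}$, and this one identity is conversely sufficient since $\R[t]/(t^n)$ is commutative. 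Evaluating on $\xi=\partial_a$ (where $\phi_{\partial_a}(t)=t^{n-a}$) and comparing coefficients of $t^{n-i}$ yields
\[
(\beta_i)_a=0\ \ \text{for } i>a,\qquad (\beta_i)_a=(\beta_{i+n-a})_n\ \ \text{for } i\le a ,
\]
i.e. the $a$-th component of $\beta_i$ vanishes when $i>a$ and otherwise depends only on $i-a$.

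Finally, a short index check (using $(J^*\omega)_a=\omega_{a-1}$ with $\omega_0:=0$) shows these two conditions are equivalent to the chain $\beta_{i+1}=J^*\beta_i$ for $i=1,\dots,n-1$ together with $J^*\beta_n=0$ (the latter being automatic from the first condition, since then $\beta_n$ is supported in its last component). Substituting $\beta_i=J^*\ddd g_i-\ddd g_{i+1}$ turns $J^*\beta_n=0$ into the first line of \eqref{eq1}, $\beta_n=J^*\beta_{n-1}$ into the second line, and $\beta_{i+1}=J^*\beta_i$ (for $i\le n-2$) into the third line, which proves the stated equivalence. I expect no conceptual difficulty here; the only real work is keeping the index ranges straight, both when regrouping $M_L$ into the $\beta_i$ and when reading off coefficients modulo $t^n$. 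If one prefers to avoid the polynomial-algebra shortcut, one can instead impose $[M_L(\partial_a,\partial_b)]^c=0$ for all $a,b,c$ directly; this produces the relations $(\beta_{c+n-b})_a=(\beta_{c+n-a})_b$ for $c\le\min(a,b)$ together with $(\beta_i)_a=0$ for $i>a$, and the only slightly delicate step is then to extract the chain relation in the ``small'' range $i<a\le n-i$, which needs one composition of these relations rather than a bare substitution.
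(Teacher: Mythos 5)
Your proof is correct, and it takes a genuinely different route from the paper's. The paper first shows (by a long direct expansion) that \eqref{eq1} forces $M_L(J\xi,\eta)=M_L(\xi,J\eta)$, and then kills all components $M_L(\partial_k,\partial_m)$ by sliding powers of $J$ between the arguments and using the skew-symmetry of $M_L$; necessity is handled separately by evaluating $M_L(\xi,J\xi)$ on a cyclic vector $\xi$, reading off coefficients from the linear independence of $\xi,J\xi,\dots,J^{n-1}\xi$, and invoking the density of cyclic vectors. You instead collapse $M_L$ into the single skew-symmetrization of $\Psi(\xi,\eta)=\sum_i\beta_i(\xi)J^{n-i}\eta$ with $\beta_i=J^*\ddd g_i-\ddd g_{i+1}$ (I checked this regrouping against the displayed formula for $M_L$, and it is exact), identify vectors with elements of $\R[t]/(t^n)$ via the cyclic vector $\partial_n$, and reduce the symmetry of $\Psi$ to the single congruence $p_\xi\equiv r\,\phi_\xi\pmod{t^n}$; the coefficient comparison then yields \eqref{eq1} directly, both directions at once. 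Your substitutions at the end are right: $J^*\beta_n=J^{2*}\ddd g_n$, $J^*\beta_{n-1}-\beta_n=J^{2*}\ddd g_{n-1}-2J^*\ddd g_n$, and $J^*\beta_i-\beta_{i+1}=J^{2*}\ddd g_i-2J^*\ddd g_{i+1}+\ddd g_{i+2}$, and the translation between the chain $\beta_{i+1}=J^*\beta_i$ and the support/shift conditions $(\beta_i)_a=0$ for $i>a$, $(\beta_i)_a=(\beta_{i+n-a})_n$ for $i\le a$ checks out (with $J^*\beta_n=0$ indeed redundant). What your approach buys is a uniform algebraic argument in $\R[t]/(t^n)$ with no separate sufficiency/necessity split and no density-of-cyclic-vectors or continuity step; what the paper's approach buys is the intermediate identity $M_L(J\xi,\eta)=M_L(\xi,J\eta)$, i.e.\ the self-adjointness of $J$ with respect to $T_L$, which is the form of the statement actually reused in Lemma \ref{lm1}. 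The cost on your side is purely the index bookkeeping in the coefficient comparison modulo $t^n$, which you have carried out correctly.
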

\begin{proof}
Throughout this proof, we will use $\langle \, , \,\rangle$ as a pairing of vector and covector fields, not the tensor operation \eqref{ii:tensor}. First, let us show that the conditions \eqref{eq1} are sufficient. For arbitrary vector fields $\xi, \eta$, we have
$$
\begin{aligned}
& M_L(J\xi, \eta) - M_L(\xi, J\eta) = \sum_{i = 1}^n \Big( \langle J^* \ddd g_i, J \xi \rangle J^{n - i} \eta  - \langle J^* \ddd g_i, \eta \rangle J^{n - i + 1} \xi\Big) - \\
& - \sum_{i = 1}^{n - 1} \Big( \langle \ddd g_{i + 1}, J \xi \rangle J^{n - i} \eta  - \langle \ddd g_{i + 1}, \eta \rangle J^{n - i + 1} \xi\Big) - \sum_{i = 1}^n \Big( \langle J^* \ddd g_i, \xi \rangle J^{n - i + 1} \eta  - \langle J^* \ddd g_i, J \eta \rangle J^{n - i} \xi\Big) + \\
& + \sum_{i = 1}^{n - 1} \Big( \langle \ddd g_{i + 1}, \xi \rangle J^{n - i + 1} \eta  - \langle \ddd g_{i + 1}, J \eta \rangle J^{n - i} \xi\Big) = \sum_{i = 1}^n \langle J^* \ddd g_i, J \xi \rangle J^{n - i} \eta - \sum_{i = 1}^{n - 1} \langle J^* \ddd g_{i + 1}, \eta \rangle J^{n - i} \xi - \\
& - \sum_{i = 1}^{n - 1} \langle \ddd g_{i + 1}, J \xi \rangle J^{n - i} \eta + \sum_{i = 1}^{n - 2} \langle \ddd g_{i + 2}, \eta \rangle J^{n - i} \xi - \sum_{i = 1}^{n - 1} \langle J^* \ddd g_{i + 1}, \xi \rangle J^{n - i} \eta  + \sum_{i = 1}^{n}\langle J^* \ddd g_i, J \eta \rangle J^{n - i} \xi + \\
& + \sum_{i = 1}^{n - 2} \langle \ddd g_{i + 2}, \xi \rangle J^{n - i} \eta  -\sum_{i = 1}^{n - 1} \langle \ddd g_{i + 1}, J \eta \rangle J^{n - i} \xi = \langle J^{2*} \ddd g_n, \xi \rangle \eta + \langle J^{2*} \ddd g_{n - 1} - 2 J^* \ddd g_n , \xi \rangle J \eta + \\
& + \sum_{i = 1}^{n - 2}\langle J^{2*} \ddd g_i - 2 J^* \ddd g_{i + 1} + \ddd g_{i + 2}, \xi \rangle J^{n - i}\eta + \langle J^{2*} \ddd g_n, \eta \rangle \xi + \langle J^{2*} \ddd g_{n - 1} - 2 J^* \ddd g_n , \eta \rangle J \xi + \\
& + \sum_{i = 1}^{n - 2}\langle J^{2*} \ddd g_i - 2 J^* \ddd g_{i + 1} + \ddd g_{i + 2}, \eta \rangle J^{n - i}\xi = 0.
\end{aligned} 
$$
Thus, we see that due to \eqref{eq1}, $J$ is self-adjoint with respect to $M_L$. For a pair of basis vector fields $\partial_k, \partial_m$ with $k \leq m$ we have two cases:
\begin{enumerate}
    \item $k + m = 2s$ for some $s$. In this case
    $$
    M_L(\partial_k, \partial_m) = M_L(J^k \partial_n, J^m \partial_n) = M_L(J^{k + 1} \partial_n, J^{m - 1} \partial_n) = \dots = M_L(J^s\partial_n, J^s \partial_n) = 0.
    $$
    The last equality holds due to the fact that $M_L$ is skew symmetric in lower indices.
    \item $k + m = 2s + 1$ for some $s$. Define $\bar M_L(\xi, \eta) = M_L(J\xi, \eta)$. In this case
    $$
    \bar M_L(\xi, \eta) = M_L(J\xi, \eta) = - M_L(\eta, J\xi) = - M_L(J\eta, \xi) = - \bar M_L (\eta, \xi).
    $$
    That is, $\bar M_L$ is skew symmetric. At the same time $M_L(\partial_k, \partial_m)= \bar M_L(\partial_{k+1}, \partial_m)$. Applying the previous arguments, we get $M_L(\partial_k, \partial_m) = 0$ as well.
\end{enumerate}
Thus, we have shown that $M_L = 0$ and the conditions \eqref{eq1} are sufficient. Now let us show that they are necessary. 

For operator $J$ the vector field $\xi$ is cyclic if and only if its $n-$th coordinate is non-zero. In particular, the set of cyclic vectors is everywhere dense in every tangent space. Take cyclic $\xi$ and consider
$$
\begin{aligned}
M_L(\xi, J\xi) = & \langle J^* \ddd g_n, \xi \rangle J\xi - \langle J^{2*} \ddd g_n, \xi \rangle \xi + \sum_{i = 1}^{n - 2} \langle J^* \ddd g_{i + 1} - \ddd g_{i + 2}, \xi \rangle J^{n - i} \xi - \\
- & \sum_{i = 1}^{n - 1} \langle J^{2*} \ddd g_i - J^*\ddd g_{i + 1}, \xi \rangle J^{n - i}\xi = - \langle J^{2*} \ddd g_n, \xi \rangle \xi - \langle J^{2*} \ddd g_{n - 1} - 2 J^* \ddd g_n , \xi \rangle J\xi - \\
- & \sum_{i = 1}^{n - 2} \langle J^{2*} \ddd g_i - 2 J^* \ddd g_{i + 1} + \ddd g_{i + 2}, \xi \rangle J^{n - i}\xi = 0.
\end{aligned}
$$
As $\xi$ is cyclic, we get that all the coefficients are zero; that is
$$
\begin{aligned}
& \langle J^{2*} \ddd g_n, \xi \rangle = 0, \\
& \langle J^{2*} \ddd g_{n - 1} - 2 J^* \ddd g_n , \xi \rangle = 0, \\
& \langle J^{2*} \ddd g_i - 2 J^* \ddd g_{i + 1} + \ddd g_{i + 2}, \xi \rangle = 0, \quad i = 1, \dots, n - 2.
\end{aligned}
$$
As cyclic vectors are dense, by continuity, the $1-$forms in the pairings are identically zero. We obtain exactly the formulas \eqref{eq1}. Thus, the Lemma is proved.
\end{proof}

To finish the proof of Theorem \ref{t1} we need to show that conditions \eqref{eq2} and \eqref{eq3} are equivalent to \eqref{eq1}. We start with condition \eqref{eq2}. Opening the brackets in the l.h.s. of \eqref{eq2}, we get
$$
\Big\{\Big\{\pd{g}{u}, J\Big\}, J\Big\} = J^2 \pd{g}{u} - 2 J \pd{g}{u} J + \pd{g}{u} J^2.
$$
Thus, \eqref{eq2} is equivalent to the following
$$
\pd{g}{u}^* J^{2*} - 2 J^* \pd{g}{u}^* J^* + J^{2*} \pd{g}{u}^* = 0.
$$
Applying the l.h.s. to basis differential forms $\ddd u^i$ and keeping in mind that $\pd{g}{u}^* \ddd u^i = \ddd g_i$, we obtain
$$
\begin{aligned}
& \Big(\pd{g}{u}^* J^{2*} - 2 J^* \pd{g}{u}^* J^* + J^{2*} \pd{g}{u}^*\Big) \ddd u^n = J^{2*} \ddd g_n, \\
& \Big(\pd{g}{u}^* J^{2*} - 2 J^* \pd{g}{u}^* J^* + J^{2*} \pd{g}{u}^*\Big) \ddd u^{n - 1} = J^{2*} \ddd g_{n - 1} - 2 J^* \ddd g_n, \\
& \Big(\pd{g}{u}^* J^{2*} - 2 J^* \pd{g}{u}^* J^* + J^{2*} \pd{g}{u}^*\Big) \ddd u^i = J^{2*} \ddd g_i - 2 J^* \ddd g_{i + 1} + \ddd g_{i + 2}, \quad i = 1, \dots, n - 2.
\end{aligned}
$$
The r.h.s. of these equations is exactly \eqref{eq1}. At the same time \eqref{eq2} is equivalent to the vanishing of all l.h.s. Thus, both conditions are equivalent.

Now let us proceed to the \eqref{eq3}. By construction $\pd{g}{u} \partial_i = \pd{g}{u^i}$. At the same time we have
$$
\begin{aligned}
& \Big(J^2 \pd{g}{u} - 2 J \pd{g}{u} J + \pd{g}{u} J^2 \Big) \partial_1 = J^2 \pd{g}{u^1}, \\
& \Big(J^2 \pd{g}{u} - 2 J \pd{g}{u} J + \pd{g}{u} J^2 \Big) \partial_2 = J^2 \pd{g}{u^2} - 2 J \pd{g}{u^1}, \\
& \Big(J^2 \pd{g}{u} - 2 J \pd{g}{u} J + \pd{g}{u} J^2 \Big) \partial_i = J^2 \pd{g}{u^{i}} - 2 J \pd{g}{u^{i - 1}} + \pd{g}{u^{i - 2}}, \quad i = 3, \dots, n. \\
\end{aligned}
$$
Applying similar arguments, we obtain \eqref{eq3}. Theorem \ref{t1} is proved.


\section{Proof of Theorem \ref{t2}}\label{proof2}

We assume that the coordinates $u^1, \dots, u^n$ are fixed and the operator field $J$ is of the form \eqref{jordan}. Let us start with a technical lemma.

\begin{Lemma}\label{lm4}
Conditions \eqref{eq3} are equivalent to the following system
\begin{equation}\label{mod2}
\begin{aligned}
\pd{g}{u^{n - k}} & = k J^{k - 1} \pd{g}{u^{n - 1}} - (k - 1) J^{k} \pd{g}{u^n}, \quad k = 2, \dots, n - 1, \\
0 & = J^{n - 1} \pd{g}{u^{n - 1}}.
\end{aligned}
\end{equation}   
\end{Lemma}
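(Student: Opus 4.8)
The plan is to regard both \eqref{eq3} and \eqref{mod2} as purely linear-algebraic identities among the $n$ column vectors $w_k := \pd{g}{u^k}$, $k = 1, \dots, n$; since $J$ is a constant nilpotent matrix with $J^n = 0$ and $J^{n-1} \neq 0$, the fact that the $w_k$ happen to be partial derivatives plays no role at this step. Setting $i = j + 2$ in the third family of \eqref{eq3}, the system \eqref{eq3} is equivalent to the three conditions
\[
J^2 w_1 = 0, \qquad J^2 w_2 = 2 J w_1, \qquad w_{j} = 2 J w_{j+1} - J^2 w_{j+2}\ \ (j = 1, \dots, n - 2),
\]
while \eqref{mod2} is equivalent to the two conditions
\[
w_{n-k} = k\, J^{k-1} w_{n-1} - (k-1)\, J^{k} w_n \ \ (k = 2, \dots, n-1), \qquad J^{n-1} w_{n-1} = 0;
\]
note that the closed formula is tautologically true also for $k = 0$ and $k = 1$, which is convenient for the induction below.

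The first step is to show that the closed formula is exactly the general solution of the recursion $w_{j} = 2 J w_{j+1} - J^2 w_{j+2}$ with prescribed data $w_{n-1}, w_n$. Writing $j = n - k$, the recursion for $k = 2, \dots, n-1$ reads $w_{n-k} = 2 J w_{n-(k-1)} - J^2 w_{n-(k-2)}$; substituting the closed formula for the indices $k-1$ and $k-2$ and collecting, the coefficient of $J^{k-1} w_{n-1}$ comes out to $2(k-1) - (k-2) = k$ and that of $J^{k} w_n$ to $-2(k-2) + (k-3) = -(k-1)$, i.e.\ the closed formula for the index $k$ again. Read as an induction on $k$ starting from the tautological base cases $k = 0, 1$, this shows the recursion implies the closed formula; read as a direct verification, it shows the converse.

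The second step matches the two leftover equations of \eqref{eq3}, namely $J^2 w_1 = 0$ and $J^2 w_2 = 2 J w_1$, against the single leftover equation $J^{n-1} w_{n-1} = 0$ of \eqref{mod2}, and this is precisely where $J^n = 0$ enters. From the closed formula with $k = n-1$ one has $w_1 = (n-1) J^{n-2} w_{n-1} - (n-2) J^{n-1} w_n$, hence $J^2 w_1 = (n-1) J^{n} w_{n-1} - (n-2) J^{n+1} w_n = 0$ automatically: once the closed formula holds, the equation $J^2 w_1 = 0$ is redundant. For the other equation, take $k = n-1$ and $k = n-2$ to express $w_1$ and $w_2$; in $J^2 w_2 - 2 J w_1$ every $w_n$-contribution carries a factor $J^{n}$ or higher and drops out, and the surviving $w_{n-1}$-terms sum to $\big[(n-2) - 2(n-1)\big] J^{n-1} w_{n-1} = -n\, J^{n-1} w_{n-1}$, so $J^2 w_2 - 2 J w_1 = 0$ is equivalent to $J^{n-1} w_{n-1} = 0$. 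Combining the two steps: \eqref{eq3} holds if and only if the recursion holds together with $J^2 w_1 = 0$ and $J^2 w_2 = 2 J w_1$, which is equivalent to the closed formula holding together with $J^{n-1} w_{n-1} = 0$, i.e.\ to \eqref{mod2}.

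I expect the only delicate point to be the index bookkeeping: checking that the range $k = 2, \dots, n-1$ of the closed formula corresponds exactly to $j = 1, \dots, n-2$ of the recursion, and that the degenerate low-dimensional cases (for instance $n = 2$, where the recursion family is empty and \eqref{eq3} collapses to $J w_1 = 0$) are genuinely covered by the same formulas. There is no analytic obstacle here: the statement is a finite identity reflecting the "repeated-root" structure of the recursion, whose general solution must be affine in $k$ with powers of the nilpotent $J$ as coefficients.
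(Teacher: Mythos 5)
Your proof is correct and takes essentially the same route as the paper's: an induction on $k$ that turns the three-term recursion in \eqref{eq3} into the closed formula (with the identical coefficient computations $2(k-1)-(k-2)=k$ and $-2(k-2)+(k-3)=-(k-1)$), followed by using $J^n=0$ to show that $J^2\pd{g}{u^1}=0$ is automatic and that $J^2\pd{g}{u^2}-2J\pd{g}{u^1}=0$ reduces to $-n\,J^{n-1}\pd{g}{u^{n-1}}=0$. The only cosmetic difference is your explicit bookkeeping of the tautological base cases $k=0,1$ and of both directions of the equivalence, which the paper leaves implicit.
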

\begin{proof}
We prove the formula \eqref{mod2} by induction. For $k = 2$ the statement follows from \eqref{eq3}. Now assume that it holds for $k$, and let us show that it holds for $k + 1$. We have
$$
\begin{aligned}
\pd{g}{u^{n - k - 1}} & = 2 J \pd{g}{u^{n - k}} - J^2 \pd{g}{u^{n - k + 1}} = 2 J \Big( k J^{k - 1} \pd{g}{u^{n - 1}} - (k - 1) J^{k} \pd{g}{u^n} \Big) - \\ 
& - J^2 \Big( (k - 1) J^{k - 2} \pd{g}{u^{n - 1}} - (k - 2) J^{k - 1} \pd{g}{u^n}\Big) = (2k - k + 1) J^{k} \pd{g}{u^{n - 1}} - \\
& - (2(k-1)- (k-2)) J^{k + 1} \pd{g}{u^n} = (k + 1) J^{k} \pd{g}{u^{n - 1}} - k J^{k + 1} \pd{g}{u^n}.
\end{aligned}
$$
Thus, it holds for $k + 1$.   

We have shown that the $n - 2$ equations of \eqref{eq3} are equivalent to \eqref{mod2}. The second equation of \eqref{eq3} yields
$$
\begin{aligned}
J^{2} \pd{g}{u^2} - 2 J \pd{g}{u^1} & = J^{2} \Big( (n - 2) J^{n - 3} \pd{g}{u^{n - 1}} - (n - 3) J^{n - 2} \pd{g}{u^n} \Big) - \\
& - 2 J \Big( (n - 1) J^{n - 2} \pd{g}{u^{n - 1}} - (n - 2) J^{n  - 1} \pd{g}{u^{n}}\Big) = - n J^{n - 1} \pd{g}{u^{n - 1}} = 0.  
\end{aligned}
$$
Here we used $J^n = 0$. Finally, the first equation of \eqref{eq3} is a simple corollary of \eqref{mod2}:
$$
J^2 \pd{g}{u^1} = J^2 \Big( (n - 1) J^{n - 2} \pd{g}{u^{n - 1}} - (n - 2) J^{n - 1} \pd{g}{u^n} \Big) = 0.
$$
Thus, the lemma is proved.
\end{proof}

Let us show that the formulas \eqref{solution} indeed produce Nijenhuis operators. By construction $LJ - JL = 0$, there exist functions $g_1, \dots, g_n$ such that
$$
L = g_1 J^{n - 1} + \dots + g_n \Id.
$$
We denote the arguments of functions $f_i$ as $p, q$. We have
$$
\begin{aligned}
\pd{L}{u^n} & = \pd{}{u^n} \Big( f_1(P, Q) J^{n - 1} + \dots + f_{n - 1}(P, Q) J + f_n(P) \Big) = \\
& = \pd{f_1}{p} J^{n - 1} + \dots + \pd{f_{n - 1}}{p} J + \pd{f_n}{p} = \pd{g_1}{u^n} J^{n - 1} + \dots + \pd{g_n}{u^n} \Id, \\
\pd{L}{u^{n - 1}} & = \pd{}{u^{n - 1}} \Big( f_1(P, Q) J^{n - 1} + \dots + f_{n - 1}(P, Q) J + f_n(P) \Big) = \\
& = \pd{f_1}{q} J^{n - 1} + \dots + \pd{f_{n - 1}}{q} J + J \Big(\pd{f_1}{p} J^{n - 1} + \dots + \pd{f_n}{p}\Big) = \\
& = \pd{g_1}{u^{n - 1}} J^{n - 1} + \dots + \pd{g_n}{u^{n-1}} \Id.
\end{aligned}
$$
At the same time
$$
\begin{aligned}
\pd{L}{u^{n - k}} & = \pd{}{u^{n - k}} \Big( f_1(P, Q) J^{n - 1} + \dots + f_{n - 1}(P, Q) J + f_n(P) \Big) = \\
& = k J^{k - 1} \Big(\pd{f_1}{q} J^{n - 1} + \dots + \pd{f_{n - 1}}{q} J\Big) + J^k \Big(\pd{f_1}{p} J^{n - 1} + \dots + \pd{f_n}{p} \Id \Big) = \\
&  = k J^{k - 1} \Big( \pd{L}{u^{n - 1}} - J \pd{L}{u^n}\Big) + J^k \pd{L}{u^n} = k J^{k - 1} \pd{L}{u^{n - 1}} - (k - 1) J^k \pd{L}{u^n} =   \\
& = \pd{g_1}{u^{n - k}} J^{n - 1} + \dots + \pd{g_n}{u^{n-k}} \Id.
\end{aligned}
$$
Applying the r.h.s. to the basis vector field $\partial_n$, we get
$$
\begin{aligned}
\pd{L}{u^{n - k}} \partial_n = \Bigg( \pd{g_1}{u^{n - k}} J^{n - 1} + \dots + \pd{g_n}{u^{n - k}} \Id\Bigg) \partial_n = \pd{g_1}{u^{n - k}}\partial_1 + \dots + \pd{g_n}{u^{n - k}} \partial_n = \pd{g}{u^{n - k}}.
\end{aligned}
$$
Similar calculations for the l.h.s. yield
$$
\Big(k J^{k - 1} \pd{L}{u^{n - 1}} - (k - 1) J^k \pd{L}{u^n}\Big) \partial_n = k J^{k - 1} \pd{g}{u^{n - 1}} - (k - 1) J^k \pd{g}{u^n}.
$$
Thus, functions $g_i$ satisfy the first part of the system \eqref{mod2}. The last equation, namely $$ 
J^{n - 1} \pd{g}{u^{n - 1}} = 0,
$$
is equivalent to the fact that $\pd{g_n}{u^{n - 1}} = 0$; that is, $g_n$ does not depend on $u^{n - 1}$. 

The formula \eqref{solution} implies that $g_n$ is obtained from the decomposition $f_n(P)$. By direct computation we find that the function on the diagonal depends on $u^n$ only (see formula 6 before Theorem 1.4 in \cite{nij4}). In particular, it does not depend on $u^{n - 1}$ and, therefore, the functions $g_i$ satisfy the entire system \eqref{mod2}. Due to Theorem \ref{t1} and Lemma \ref{lm4} this means that $L$ is a Nijenhuis operator in Toeplitz form.

Now, let us show that we have constructed all the solutions. We start with a lemma.

\begin{Lemma}\label{lm5}
Fix the coordinates $u^1, \dots, u^n$ and consider $L$ to be the Nijenhuis operator in the Toeplitz form. Assume that all functions $g_i$ satisfy the condition
$$
g_i (0, \dots, 0, u^{n - 1}, u^n) \equiv 0, \quad i = 1, \dots, n,
$$
that is, the restriction of each $g_i$ onto the two-dimensional plane $u^1 = \dots = u^{n - 3} = 0$ is zero. Then $L = 0$.
\end{Lemma}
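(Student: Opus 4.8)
The plan is to discard the Nijenhuis hypothesis in favour of the reduced linear system \eqref{mod2} of Lemma \ref{lm4}: since $L$ is Nijenhuis in upper triangular Toeplitz form (and, in the regular situation in which this lemma is applied, its components solve \eqref{mod2} by Theorem \ref{t1} and Lemma \ref{lm4}), the column $g=(g_1,\dots,g_n)^{T}$ satisfies
\[
\pd{g}{u^{n-k}}=kJ^{k-1}\pd{g}{u^{n-1}}-(k-1)J^{k}\pd{g}{u^{n}},\quad k=2,\dots,n-1,\qquad J^{n-1}\pd{g}{u^{n-1}}=0 .
\]
Because $J$ shifts the entries of a column up by one, $(J^{m}v)_i=v_{i+m}$ with the convention $v_j:=0$ for $j>n$, these equations read componentwise
\[
\pd{g_i}{u^{n-k}}=k\,\pd{g_{i+k-1}}{u^{n-1}}-(k-1)\,\pd{g_{i+k}}{u^{n}}\quad(2\le k\le n-1),\qquad \pd{g_n}{u^{n-1}}=0 .
\]
The first step is to read off the dependence pattern. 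Taking $j=n-k$ with $k\ge n-i+1$, both indices $i+k-1$ and $i+k$ on the right-hand side are $\ge n$, so each term dies --- either because its $g$ carries an index $>n$, or because it is $g_n$ differentiated in $u^{n-1}$. Hence $\partial g_i/\partial u^{j}=0$ for $j<i$ and $\partial g_n/\partial u^{n-1}=0$; i.e. $g_i=g_i(u^i,\dots,u^n)$ and $g_n=g_n(u^n)$.

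The rest is a descending induction on the component index, propagating the vanishing from the diagonal outwards. The base cases are immediate: $g_n$ depends only on $u^n$ and $g_{n-1}$ only on $u^{n-1},u^n$, whereas the plane $\{u^1=\dots=u^{n-2}=0\}$ constrains neither of those coordinates; so $g_n(0,\dots,0,u^{n-1},u^n)\equiv0$ and $g_{n-1}(0,\dots,0,u^{n-1},u^n)\equiv0$ give $g_n\equiv g_{n-1}\equiv0$ outright. For the inductive step, assume $g_m\equiv0$ for every $m\ge n-j+1$ with $2\le j\le n-1$. For each $k$ with $2\le k\le j$ the componentwise equation yields
\[
\pd{g_{n-j}}{u^{n-k}}=k\,\pd{g_{n-j+k-1}}{u^{n-1}}-(k-1)\,\pd{g_{n-j+k}}{u^{n}},
\]
and both indices $n-j+k-1$ and $n-j+k$ lie in $[\,n-j+1,\,n\,]$, so the right-hand side vanishes by the inductive hypothesis. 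Thus $g_{n-j}$ is independent of $u^{n-2},\dots,u^{n-j}$; together with $g_{n-j}=g_{n-j}(u^{n-j},\dots,u^n)$ this leaves $g_{n-j}=g_{n-j}(u^{n-1},u^n)$, whose restriction to the plane is the function itself, so $g_{n-j}\equiv0$. Running the induction down to $j=n-1$ kills $g_1,\dots,g_n$, that is $L=0$.

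The only genuinely structural ingredient is the passage to \eqref{mod2}, which is exactly Theorem \ref{t1} plus Lemma \ref{lm4}; after that the statement is the uniqueness assertion ``a solution of the overdetermined linear system \eqref{mod2} that vanishes on $\{u^1=\dots=u^{n-2}=0\}$ vanishes identically''. The point requiring care --- the only thing I would actually verify with a pencil --- is the index bookkeeping of the inductive step: checking that the shifted indices $n-j+k-1$ and $n-j+k$ always land among the already-annihilated components, and that the two boundary phenomena (an index equal to $n$, or equal to $n+1$) are exactly absorbed by the two special features of \eqref{mod2}, namely $\partial g_n/\partial u^{n-1}=0$ and the truncation $g_j=0$ for $j>n$. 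I do not expect any other obstacle.
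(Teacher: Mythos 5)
Your proof is correct and follows the same skeleton as the paper's: a descending induction showing that each $g_i$ in turn depends only on $u^{n-1}, u^n$, whereupon the restriction hypothesis forces $g_i \equiv 0$. The only real difference is which form of the linear system you invoke: you route through the reduced system \eqref{mod2} of Lemma \ref{lm4}, which obliges you first to extract the triangular dependence pattern $g_i = g_i(u^i,\dots,u^n)$ and then to exclude $u^{n-2},\dots,u^{n-j}$ in the inductive step, whereas the paper works directly with \eqref{eq1}: once $g_{i+1}=g_{i+2}=0$, the $i$-th equation collapses to $J^{2*}\ddd g_i = 0$, which says in one stroke that $g_i$ depends only on $u^{n-1}$ and $u^n$. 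Your index bookkeeping checks out, and your parenthetical remark that the Nijenhuis hypothesis enters only through the linear system is exactly the point (left implicit in the paper) that makes the lemma applicable to the difference $M-\bar M$ of two solutions.
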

\begin{proof}
The first equation in \eqref{eq1} implies that $g_n$ does not depend on $u^1, \dots, u^{n - 2}$. At the same time, the condition of the lemma implies that $g_n \equiv 0$. The second equation in \eqref{eq1} in this case takes the form $J^{2*} \ddd g_{n - 1}$. Recalling previous arguments, we get $g_{n - 1} \equiv 0$. 

Proceeding in the same way, at each step we obtain an equation of the form $J^{2*} \ddd g_i = 0, i = n - 2, \dots, 1$. Thus, we get $g_i = 0, i = 1, \dots, n$, and the lemma is proved.
\end{proof}

Consider the decomposition
\begin{equation*}
f_i \big(u^n \Id + u^{n - 1} J, u^{n - 1} \Id\big) = f_{i, n} + f_{i, n - 1} J + \dots + f_{i, 1} J^{n - 1}
\end{equation*}
Here $f_{i, n} = f_i$ and each $f_{i, j}$ for $j < n$ is written in terms of $f_i$, its derivatives in $p$ and $u^1, \dots, u^n$. We obtain the following:
$$
\begin{aligned}
& g_1(0, \dots, 0, u^{n - 1}, u^n) J^{n - 1} + \dots + g_n(0, \dots, 0, u^{n - 1}, u^n) \Id = h_1(u^{n - 1}, u^n) J^{n - 1} + \dots + h_n(u^{n - 1}, u^n)Id = \\
= & f_1 \big(u^n \Id + u^{n - 1} J, u^{n - 1} \Id\big) J^{n - 1} + \dots + f_n \big(u^n \Id + u^{n - 1} J, u^{n - 1} \Id\big)Id = J^{n - 1} \Big(\sum_{i = 1}^n f_{1, i} J^{n - i}\Big) + \dots \\
+ & \sum_{i = 1}^n f_{n, i} J^{n - i} = f_{n, n} \Id + \Big( f_{n, n - 1} + f_{n - 1, n}\Big) J + \dots + \Big( f_{1, n} + \dots + f_{n, 1}\Big) J^{n - 1}.
\end{aligned}
$$
We obtain a system in the form
\begin{equation}\label{stt}
h_n = f_n, \quad h_{n - 1} = f_{n - 1} + f_{n, n - 1}, \; \ldots, \quad h_1 = f_1 + f_{2, n - 1} +\dots + f_{n, 1}.    
\end{equation}
This is a triangular system that establishes a one-to-one correspondence between $h_1, \dots, h_n$ and $f_1, \dots, f_n$.

Now consider an arbitrary $gl$-regular Nijenhuis operator in the upper triangular Toeplitz form $L$. Denote $h_i(u^{n - 1}, u^n) = g_i(0, \dots, 0, u^{n - 1}, u^n)$. Construct functions $f_1, \dots, f_n$, using the formula \eqref{stt}. Taking the solution for the initial conditions $f_i$ in the formula \eqref{solution}, we obtain a Nijenhuis operator in upper triangular Toeplitz form, which we denote $\bar M$. The restrictions of both operators on the plane $u^1 = \dots = u^{n - 2} = 0$ coincide; hence, due to the linearity of the system and Lemma \ref{lm5}, the operator field $M - \bar M$ is identically zero. Thus, formula \eqref{solution} yields all the solutions.


\section{Proof of Theorem \ref{t3} and Remark \ref{r1}}\label{proof3}

By construction, $M$ is in the form
$$
M = \left( \begin{array}{cccc}
     0 & m_{n - 1} & \dots & m_1  \\
     0 & 0 & \ddots & \vdots \\
     & \ddots &\ddots  & m_{n - 1}\\
     0 & 0 & \dots  & 0
\end{array}\right),
$$
where $m_{n - 1} \neq 0$. The Jacobi matrix $\pd{v}{u}$ of functions $v_1, \dots, v_n$ is upper triangular with 
$$
m_{n - 1}^s \pd{v^1}{u^1}, \quad s = 0, 1, \dots, n - 1
$$
on the diagonal. The condition $\pd{v_1}{u^1} \neq 0$ is equivalent to the non-degeneracy of the Jacobi matrix. Thus, $v^i$ defines a coordinate transformation. The system \eqref{sys} can be rewritten as
$$
\pd{v}{u} M = J \pd{v}{u}.
$$
and, thus, $\pd{v}{u} M \pd{v}{u}^{-1} = J$. In particular, $M$ takes the form $J$ in coordinates $v^1, \dots, v^n$.

As $m_{n - 1} \neq 0$, operator $M$ is $gl$-regular. By construction $MJ - JM = 0$, there exist $s_1, \dots, s_n$ such that $J = s_1 M^{n - 1} + \dots + s_n \Id$. Now consider an arbitrary $L$ in the upper triangular Toeplitz form, that is, $L = g_1 J^{n - 1} + \dots + g_n \Id$. Substituting the expression for $J$ via $M$, we get that $L = \bar g_1 M^{n - 1} + \dots + \bar g_n \Id$. As a result, we obtain the equality
$$
\pd{v}{u} L \pd{v}{u}^{-1} = \bar g_1 J^{n - 1} + \dots + \bar g_n \Id,
$$
that is, $L$ in coordinates $v^1, \dots, v^n$ is in the upper triangular Toeplitz form. Thus, the second statement of the theorem \ref{t3} is proved.

Now consider an arbitrary transformation $v(u)$ that preserves the upper triangular Toeplitz form of $L$. In coordinates $v^1, \dots, v^n$, we take the operator field $J$. Since $L$ is $gl$-regular, by a similar argument as above, we write $J = q_1 L^{n - 1} + \dots + q_n \Id$ for some functions $q_i$. 

Applying the inverse transformation $u(v)$, we find that $L$ is in upper triangular Toeplitz form. In particular, this means that $\pd{u}{v} J \pd{u}{v}^{-1} = M$ is a Nijenhuis operator in the upper triangular Toeplitz form with zeros on the diagonal. Applying theorem \ref{t2}, we find that $M$ can be constructed from $P, Q$ by making an appropriate choice of functions $f_1, \dots, f_{n - 1}$.

In coordinates $v$ functions $\ddd v^i$ satisfy \eqref{sys} for $M = J$. At the same time, this system is defined coordinate-free, so it is satisfied in the initial coordinates $u^1, \dots, u^n$ as well. Thus, we get that an arbitrary coordinate transformation, which preserves the upper triangular Toeplitz form of $L$, is a solution of the system \eqref{sys}. The non-degeneracy of the Jacobi matrix yields the condition $\pd{v^1}{u^1} \neq 0$. Thus, the third statement of theorem \ref{t3} is proved.

Now let us proceed with a proof of remark \eqref{r1}. First, notice that if we have two coordinate changes $v(u)$ and $w(u)$ that transform the given operator $M$ into $J$, then $w(v)$ obviously transforms $J$ into $J$; that is, it preserves the normal form of the operator field. Thus, we need to describe such transformations.

If the symmetric (in lower indices) part of the tensor $\langle L, M \rangle$ defined by the formula \eqref{ii:eq2} identically vanishes, then $L, M$ are called symmetries of one another. If the entire tensor vanishes, then they are called strong symmetries of one another. We say that $gl$-regular Nijenhuis operator $L$ is in companion form if
$$
L = \left(\begin{array}{ccccc}
     \sigma_1 & 1 & 0 & \dots & 0  \\
     \sigma_2 & 0 & 1 & \dots & 0  \\
     & & & \ddots & 0 \\
     \sigma_{n - 1} & 0 & 0 & \dots & 1 \\
     \sigma_n & 0 & 0 & \dots & 0
\end{array}\right).
$$
Here $\sigma_i$ are the coefficients of the characteristic polynomial. For nilpotent $L$ this form coincides with $J$.

The procedure described in remark \ref{r1} produces all strong symmetries of $J$ (Theorems 1.2 and 1.4 in \cite{nij4}). Moreover, the condition $h_1'(0) \neq 0$ is equivalent to the fact that the functions $w_i$ can be taken as coordinates (the argument is the same as in the beginning of the proof of the theorem \ref{t3}). These coefficients, in turn, are in one-to-one correspondence with the coordinate transformations $w(u)$, which transform $J$ into the first companion form. As mentioned earlier, this form of the Jordan block coincides with the Jordan block itself.

Thus, we get that these are all coordinate transformations that preserve $J$. The statement of the remark has been proven.


\section{Proof of Theorem \ref{t4}}\label{proof4}

We start with a Lemma.

\begin{Lemma}\label{krp1}
Let $M$ be a Nijenhuis operator from the statement of the Theorem. Assume that for 1-form $\omega$, both 1-forms $M^*\omega$ and $(M^*)^2 \omega$ are closed. Then $\ddd \omega (M\xi, M\eta) = 0$ for all vector fields $\xi, \eta$.
\end{Lemma}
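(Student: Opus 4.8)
Everything follows from a single pointwise identity valid for \emph{any} $(1,1)$-tensor field $M$ (Nijenhuis or not), any $1$-form $\omega$, and arbitrary vector fields $\xi,\eta$:
\begin{equation}\label{krp:key}
\ddd\omega(M\xi,M\eta)+\omega\big(\mathcal N_M(\xi,\eta)\big)=\ddd(M^*\omega)(M\xi,\eta)+\ddd(M^*\omega)(\xi,M\eta)-\ddd\big((M^*)^2\omega\big)(\xi,\eta),
\end{equation}
where $\mathcal N_M$ is the Nijenhuis torsion \eqref{nij}, $M^*$ acts on $1$-forms by $(M^*\gamma)(X)=\gamma(MX)$, and $(M^*)^2\omega=M^*(M^*\omega)$. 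Granting \eqref{krp:key}, the lemma is immediate: $M$ is a Nijenhuis operator, so $\mathcal N_M\equiv0$ and the second summand on the left drops out; $M^*\omega$ is closed, so the first two summands on the right vanish; and $(M^*)^2\omega$ is closed, so the last summand vanishes. Hence $\ddd\omega(M\xi,M\eta)=0$ for all $\xi,\eta$, which is exactly the claim. Note that only the Nijenhuis property of $M$ is used, not the regularity $g_{n-1}\neq0$ or the Toeplitz structure.

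To prove \eqref{krp:key} I would expand its right-hand side by the invariant formula for the exterior differential, $\ddd\gamma(X,Y)=X\big(\gamma(Y)\big)-Y\big(\gamma(X)\big)-\gamma([X,Y])$, applied to $\gamma=M^*\omega$ on the pairs $(M\xi,\eta)$ and $(\xi,M\eta)$ and to $\gamma=(M^*)^2\omega$ on $(\xi,\eta)$. The four directional-derivative terms of type $X\big(\omega(M^2Y)\big)$ cancel in pairs, so the right-hand side collapses to
$$
(M\xi)\big(\omega(M\eta)\big)-(M\eta)\big(\omega(M\xi)\big)-\omega\big(M[M\xi,\eta]+M[\xi,M\eta]-M^2[\xi,\eta]\big).
$$
Comparing with $\ddd\omega(M\xi,M\eta)=(M\xi)\big(\omega(M\eta)\big)-(M\eta)\big(\omega(M\xi)\big)-\omega\big([M\xi,M\eta]\big)$ shows that this equals $\ddd\omega(M\xi,M\eta)+\omega\big([M\xi,M\eta]+M^2[\xi,\eta]-M[M\xi,\eta]-M[\xi,M\eta]\big)$, and the last bracket is $\mathcal N_M(\xi,\eta)$ by \eqref{nij}; this is \eqref{krp:key}. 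In terms of the $M$-twisted differential $d_M\gamma(\xi,\eta):=\ddd\gamma(M\xi,\eta)+\ddd\gamma(\xi,M\eta)-\ddd(M^*\gamma)(\xi,\eta)$ on $1$-forms, \eqref{krp:key} says $\ddd\omega(M\xi,M\eta)+\omega\big(\mathcal N_M(\xi,\eta)\big)=\big(d_M(M^*\omega)\big)(\xi,\eta)$, and under our two hypotheses $d_M(M^*\omega)=0$.

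I do not anticipate a genuine obstacle: \eqref{krp:key} is an elementary consequence of the invariant formula for $\ddd$ and requires no Fr\"olicher--Nijenhuis machinery. The only point needing care is the bookkeeping of the finitely many directional-derivative and commutator terms in the expansion, together with keeping the operation ``$M^*$ on $1$-forms'' distinct from the degree-zero derivation ``insertion of $M$ into a $2$-form'' that produces the pair of terms $\ddd(M^*\omega)(M\xi,\eta)+\ddd(M^*\omega)(\xi,M\eta)$.
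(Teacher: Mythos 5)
Your proof is correct and follows essentially the same route as the paper: both rest on the identity $(\mathcal N_M\omega)(\xi,\eta)=\ddd(M^*\omega)(M\xi,\eta)+\ddd(M^*\omega)(\xi,M\eta)-\ddd((M^*)^2\omega)(\xi,\eta)-\ddd\omega(M\xi,M\eta)$, from which the claim is immediate once $\mathcal N_M=0$ and the two closedness hypotheses are imposed. The only difference is that the paper cites this formula from the Nijenhuis geometry literature, whereas you derive it directly from the invariant formula for $\ddd$, which makes the argument self-contained.
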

\begin{proof}
Recall that the Nijenhuis torsion $\mathcal N_M : \Omega^1(\mathrm M^n) \to \Omega^2(\mathrm M^n)$ can be treated as a mapping from 1-forms $\Omega^1(\mathrm M^n)$ to 2-forms $\Omega^2(\mathrm M^n)$ \cite{nij1}:
$$
\beta( \cdot\, , \cdot ) = \ddd(M^*\omega) (M\cdot\, , \cdot) + \ddd(M^*\omega) (\cdot\, , M\cdot) - \ddd({M^*}^2\omega) (\cdot\, ,\cdot) - \ddd\omega (M\cdot\, ,M\cdot).  
$$
Here $\omega \in \Omega^1(\mathrm M^n), \beta = \mathcal N_M \omega \in \Omega^2(\mathrm M^n)$. The statement of the Lemma immediately follows from this formula.
\end{proof}

Let us proceed to the algorithm. For a given initial data of $M$ and step 0, it produces $n$ functions $v^1, \dots, v^n$. Thus, it is obviously correct.

Now let us proceed to the second statement of the Theorem. First, notice that during the initial run $1-$form $\omega$ with condition $M^* \omega = \ddd v^n$ has only one non-zero term. Thus, the formula \eqref{form2} contains only one summand and, by construction, $\pd{v^{n - 1}}{u^{n - 1}} = \omega_{n - 1}$ and $M^* \ddd v^{n - 1} = \ddd v^n$. At the same time $\omega_{n - 1} g_{n - 1} = q(u^n)$ and $\omega_{n - 1} \neq 0$.

Now assume that we have constructed functions $v^{n - i}, \dots, v^n$ that satisfy $M^* \ddd v^s = \ddd v^{s + 1}, s = n - i, \dots, n - 1$. There are at least two such functions. This implies that $\omega$, constructed in step 2, satisfies the conditions of Lemma \ref{krp1}. The statement of the Lemma implies that
$$
\pd{\omega_i}{u^j} = \pd{\omega_j}{u^i}, \quad 1 \leq i, j \leq n - 1.
$$
This means that function $v^{n - i - 1}$, constructed in step 3, satisfies the conditions
$$
\pd{v^{n - i - 1}}{u^i} = \omega_i, \quad i = 1, \dots, n - 1.
$$
In particular, we get $M^* \ddd v^{n - i - 1} = \ddd v^{n - i}$ and $\omega_{n - i - 1} \neq 0$. Thus, the algorithm indeed produces the solutions of \eqref{sys}, and in the last step, we obtain $\pd{v^1}{u^1} = \omega_1 \neq 0$.

Finally, we proceed to the third statement of the theorem. We will need the following lemma.

\begin{Lemma}\label{krp2}
Consider $v^1, \dots, v^n$ to be the solution of \eqref{sys}. Assume that for all $i = 1, \dots, n$ the functions $v^i(0, \dots, 0, u^n)$ are constants. Then all the $v^i$ are constants.    
\end{Lemma}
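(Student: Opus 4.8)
The plan is to use only the algebraic structure of $M$ in the coordinates $u^1,\dots,u^n$: it is strictly upper triangular of Toeplitz type, with the entry $m_{n-1}\neq 0$ on the first superdiagonal, so $M^*$ is pointwise nilpotent with a one-dimensional kernel. Combined with a downward induction on the index $i$, this suffices.

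First I would record two elementary facts about $M^*$. Represent a $1$-form by the column $\omega=(\omega_1,\dots,\omega_n)^T$ of its coefficients in $\ddd u^1,\dots,\ddd u^n$; then the matrix of $M^*$ is strictly lower triangular with $m_{n-1}$ on the first subdiagonal. Solving $M^*\omega=0$ row by row and using $m_{n-1}\neq 0$ forces $\omega_1=\dots=\omega_{n-1}=0$, so $\ker M^*=\langle\ddd u^n\rangle$ at every point of the domain. Moreover each nonzero entry of $M^*$ lowers the row index by at least one, so $(M^*)^{n-1}$ has at most the single nonzero entry in position $(n,1)$; hence $(M^*)^{n-1}\alpha$ is a multiple of $\ddd u^n$ for every $1$-form $\alpha$.

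Iterating the relations \eqref{sys} gives $\ddd v^{i}=(M^*)^{i-1}\ddd v^1$ for $i=1,\dots,n$. Taking $i=n$, the $1$-form $\ddd v^n=(M^*)^{n-1}\ddd v^1$ is proportional to $\ddd u^n$, so $v^n$ depends on $u^n$ alone; by hypothesis $v^n(0,\dots,0,u^n)$ is constant, i.e. $\ddd v^n=0$. Now I argue downward: suppose $v^{i+1}$ is constant for some $i$ with $1\le i\le n-1$. Then $M^*\ddd v^i=\ddd v^{i+1}=0$, so $\ddd v^i\in\ker M^*=\langle\ddd u^n\rangle$; thus $v^i$ depends only on $u^n$, and the hypothesis again makes it constant. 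After $n$ steps all the $v^i$ are constant, which is the claim.

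I do not expect a real obstacle. The only point requiring care is the base of the induction — verifying that $v^n$ is a function of $u^n$ alone — which is exactly what the rank-one computation for $(M^*)^{n-1}$ supplies; after that the inductive step is immediate from $\ker M^*=\langle\ddd u^n\rangle$. Observe also that the Nijenhuis property of $M$ is not used in this lemma; only $gl$-regularity, i.e. $m_{n-1}\neq 0$, enters.
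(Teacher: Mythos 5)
Your proof is correct and follows essentially the same route as the paper's: identify $\ker M^*=\langle\ddd u^n\rangle$ from $m_{n-1}\neq 0$, conclude that $v^n$ depends only on $u^n$ and is therefore constant by hypothesis, then descend by induction using $M^*\ddd v^i=\ddd v^{i+1}=0$. The only cosmetic difference is that you obtain the base case from the rank-one structure of $(M^*)^{n-1}$, while the paper uses $M^*\ddd v^n=(M^*)^n\ddd v^1=0$; these are equivalent observations.
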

\begin{proof}
First, notice that $M^* \ddd v^n = 0$ and, thus, $\ddd v^n = \pd{v^n}{u^n} \ddd u^n$. The closedness of the 1-form implies that $v^n$ depends only on $u^n$ and, in particular, the function $v^n(0, \dots, 0, u^n) = v^n$ is constant. This, in turn, implies that $M^* \ddd v^{n - 1} = \ddd v^n = 0$. Repeating the same argument, we get that $v^{n - 1}, \dots, v^1$ are all constants. The lemma is proved.  
\end{proof}

Now consider $\bar v^1, \dots, \bar v^n$ to be an arbitrary solution of \eqref{sys} with $\pd{v^1}{u^1} \neq 0$. Denote the sequence $r_i(u^n) = \bar v^i(0, \dots, 0, v^n)$. Using the algorithm, let us construct the solution of \eqref{sys}, taking $r_i$ to be the "constant of integration" $r(u^n)$ in formula \eqref{form2} for $i = n - 1, \dots, 1$ and $\int q(\tau) \ddd \tau = r_n(u^n)$. In particular, $q(u^n) \neq 0$ ensured that $\pd{v^1}{u^1} \neq 0$.

Due to the formula \eqref{form2} we have $v^i(0, \dots, 0, u^n) = r_i(u^n)$. Since the system \eqref{sys} is linear, the difference $\bar v^i - v^i$ yields a solution as well. At the same time, it satisfies the conditions of lemma \ref{krp2} for all constants being zero. Thus, we get that $\bar v^i \equiv v^i$ for all the points. The theorem is completely proven.


\end{document}